\newcommand\copyrighttext{%
  \footnotesize
  This is an author-created, un-copyedited version of an article published in Inverse Problems. IOP Publishing Ltd is not responsible for any errors or omissions in this version of the manuscript or any version derived from it. The Version of Record is available online at http://dx.doi.org/10.1088/0266-5611/32/3/035001.}
\newcommand\copyrightnotice{%
\begin{tikzpicture}[remember picture,overlay]
\node[anchor=north] at (current page.north)
{\fbox{\parbox{\dimexpr\textwidth-\fboxsep-\fboxrule\relax}{\copyrighttext}}};
\end{tikzpicture}%
} 
\newtheorem{Theorem}{Theorem}[section]		
\newtheorem{Corollary}[Theorem]{Corollary}
\newtheorem{Proposition}[Theorem]{Proposition}
\newtheorem{Lemma}[Theorem]{Lemma}
\newtheorem{Definition}[Theorem]{Definition}
\newtheorem{Remark}[Theorem]{Remark}
\newtheorem{Procedure}[Theorem]{Procedure}
\def\req#1{{\rm(\ref{#1})}}
\def\norm#1{\hspace{0.2ex} \|#1\| \hspace{0.2ex}}
\renewcommand{\sp}[2]{\langle#1,#2\rangle}
\newcommand{\transp}{^{^{\scriptstyle\intercal}}}
\newcommand{\R}{\mathbb R}
\newcommand{\N}{\mathbb N}
\newcommand{\ei}{\mathrm{ess\,inf\,}}
\newcommand{\esup}{\mathrm{ess\,sup\,}}
\newcommand{\mcP}{\mathcal{P}}
\newcommand{\mcD}{\mathcal{D}}
\newcommand{\mcF}{\mathcal{F}}
\newcommand{\mcB}{\mathcal{B}}
\newcommand{\Rpp}{\R_+^p}
\newcommand{\uc}{\underline{c}}
\newcommand{\FD}{Fr\'{e}chet derivative\xspace}
\begin{document}

\copyrightnotice 

\title[A Reduced Basis Landweber method for nonlinear inverse problems]{A Reduced Basis Landweber method for nonlinear inverse problems}
\author{Dominik Garmatter$^1$, 
Bernard Haasdonk$^2$, Bastian Harrach$^1$}
\address{$^1$ Department of Mathematics,
Goethe University Frankfurt, Germany}
\address{$^2$ Department of Mathematics,
University of Stuttgart, Germany}
\eads{\mailto{garmatter@math.uni-frankfurt.de}, \mailto{haasdonk@math.uni-stuttgart.de}, \mailto{harrach@math.uni-frankfurt.de}}
\begin{abstract}
We consider parameter identification problems in parametrized partial differential equations (PDE). This leads to nonlinear ill-posed inverse problems. One way to solve them are iterative regularization methods, which typically require numerous amounts of forward solutions during the solution process. In this article we consider the nonlinear Landweber method and want to couple it with the reduced basis method as a model order reduction technique in order to reduce the overall computational time. In particular, we consider PDEs with a high-dimensional parameter space, which are known to pose difficulties in the context of reduced basis methods. 
We present a new method that is able to handle such high-dimensional parameter spaces by combining the nonlinear Landweber method with adaptive online reduced basis updates.
It is then applied to the inverse problem of reconstructing the conductivity in the stationary heat equation.
\end{abstract}

\ams{
35R30, 
35J25 
}
\noindent{\it Keywords\/}: Landweber iteration;  Reduced basis method; Model order reduction; Adaptive space generation.


\section{Introduction}
The numerical solution of nonlinear inverse problems such as the identification of a parameter in a partial differential equation (PDE) from a noisy solution of the PDE via iterative regularization methods, e.g.\ the Land\-weber method or Newton-type methods, see, e.g., \cite{kaltenbacher2008iterative, engl1996regularization, rieder2003keine} for a detailed overview, usually requires numerous amounts of forward solutions of the respective PDE. Since this can be very time-consuming, it is highly desirable to speed up the solution process. 

The reduced basis method, see, e.g., \cite{RB_master_paper, Ha14RBTut} for a general survey, is a model order reduction technique that can yield a significant decrease in the computational time of the PDE solution, especially in a many-query context. The classical reduced basis framework aims at constructing a global reduced basis space that is a low-dimensional subspace of the solution space of the PDE providing accurate approximations to the PDE-solution for every parameter in the parameter domain. 
A possible way to construct such a space is to select \emph{meaningful} parameters and choose the corresponding PDE-solutions, the so called \emph{snapshots}, as basis vectors of the reduced basis space. Via Galerkin projection of the problem onto the reduced basis space, the reduced basis approximation can be computed. An offline/online decomposition of the procedure allows for the efficient and rapid computation of the reduced basis approximation for many different parameters.
Using a global reduced basis space, one can replace the expensive forward solution or a functional evaluation of it with the corresponding reduced basis quantity in the  solution procedure of a given inverse problem. We call this the direct approach and it has successfully been applied to various problems, see, e.g., \cite{nguyenrozza2009reduced, nguyenPHDThesis, HoangDentalTissuereduced}, but we want to stress that in those references the parameter space was bounded and of low dimension, which is required for the construction of a global reduced basis space.

The contribution of this paper to the field is the application of reduced basis methods to a parameter identification problem with a very high-dimensional and unbounded parameter space.
To this end we want to study the inverse model problem: given a solution $u(x),\, x\in\Omega$, of
\begin{eqnarray}
\label{eq:introduction_PDE_RBLpaper}
\nabla\cdot(\sigma(x)\nabla u(x)) &= 1,\quad x\in\Omega,\quad\mathrm{and}\quad u(x) &= 0,\quad x\in \partial\Omega,
\end{eqnarray}
identify the parameter $\sigma(x)$, with $\Omega\subset\R^2$ a bounded domain.
This is an example of recovering an image of the thermal conductivity in the stationary heat equation with constant heat source.
 Typically, instead of $u$ a noisy measurement $u^\delta$ is known, with $\norm{u-u^\delta}\leq \delta$ and noise level $\delta >0$. Since this problem is ill-posed, regularization techniques have to be applied. 
 We choose the nonlinear Landweber method in this article. 

The aim of this paper is the development of a new method to solve nonlinear inverse problems with high-dimensional parameter spaces, where our approach is based on  the ideas developed by Druskin and Zaslavsky \cite{Druskin_Zaslavski}.
We will combine the nonlinear Landweber method with the main ideas of the reduced basis method: instead of constructing a global reduced basis space, providing accurate approximations for every parameter in the parameter domain, as it is usually the case in reduced basis methods, see, e.g., \cite{DH14b}, we will adaptively construct a small problem-specific reduced basis space that may only be useful for the reconstruction of a single conductivity. 
 This will break the typical offline/online framework of reduced basis methods. A critical question then will be the  selection of the snapshots for this problem-oriented space. 
We will develop termination criteria that, together with the nonlinear Landweber method projected onto the current reduced basis space, will not only select meaningful parameters for space enrichment but also serve the solution of the posed inverse problem.
 Therefore, we adaptively enrich our reduced basis space to fit the region of the parameter space that is required to reconstruct the desired conductivity, while also reconstructing it. This will allow for the numerical treatment of very high-dimensional parameter spaces. We note that the nonlinear Landweber method being a regularization method has been extensively studied and analyzed, see, e.g., \cite{Hanke_Neubauer_Scherzer_Land_conv, engl1996regularization, kaltenbacher2008iterative, Hanke_LW_13_tau1}.

Ideas similar to our adaptive approach have been applied to a parameter estimation problem arising from the modeling of lithium ion batteries \cite{lass2014PHDThesis}, a subsonic aerodynamic shape optimization problem \cite{zahr2014progressive}, both leading to an optimization problem constrained with a nonlinear PDE, and a Bayesian inversion approach, where the parameter is modelled as a random variable, using Markov chain Monte Carlo (MCMC) methods \cite{cuiWilcox2014datadriveninversion}.

The remainder of this paper is organized as follows. In Section \ref{sec:Problem formulation_RBLpaper} we will present a mathematical formulation of the model problem, for which the nonlinear Landweber method is known to converge locally.
Section \ref{sec:chapter3_RBLpaper} contains a brief discretization as well as the key ingredients of the classical reduced basis method. 
In Section \ref{sec:RBL_RBLpaper} we will develop the new method, comment on its implementation and present numerical results. Final conclusions are drawn in Section \ref{sec:Conclusion_RBLpaper}. 
 

\section{Problem formulation}
\label{sec:Problem formulation_RBLpaper}

We present a well-known setting in which the Landweber method applied to \req{eq:introduction_PDE_RBLpaper} does converge locally.
Throughout this section we assume $\Omega\subset\R^2$ to be a bounded domain with $C^2$-boundary. 
Following \cite{Ito_Kunisch_aex, Hanke_97_a_example_2d, Kaltenbacher_Banachspaces_a_example_2d}, we choose the parameter space
\begin{eqnarray*}
H_+^{2}(\Omega) := \{\sigma(x)\in H^2(\Omega) \mid \ei\sigma(x)>0\},
\end{eqnarray*}
with $H^2(\Omega)$ the usual Sobolev space. Since $\Omega$ has a $C^2$-boundary, $H^2(\Omega)$ embeds continuously into $L^\infty(\Omega)$, cf. \cite[Theorem $4.12$]{adams2003sobolev}, such that taking the essential infimum of a function in $H^2(\Omega)$ is a continuous mapping and $H_+^2(\Omega)$ is an open subset of $H^2(\Omega)$. We consider the PDE: for given $\sigma(x)\in H_+^2(\Omega)$ find the (weak) solution $u\in H_0^1(\Omega)$ of
\begin{eqnarray}
\label{eq:PDE_RBLpaper}
\nabla\cdot(\sigma(x)\nabla u(x)) = 1.
\end{eqnarray}
The corresponding parameter-to-solution map is defined as
\begin{eqnarray}
\label{eq:forward_operator_RBLpaper}
\eqalign{
\mcF :\mcD(\mcF) := H_+^2(\Omega)\subset H^2(\Omega) \longrightarrow L^2(\Omega)\\
\mathcal{F}(\sigma) = u \quad u\in H_0^1(\Omega)\subset L^2(\Omega)\mbox{ solving}\\
b(u,w;\sigma) = f(w) \quad\mbox{ for all } w\in H_0^1(\Omega),\\
b(u,w;\sigma) := \int_\Omega \sigma \nabla u \cdot \nabla w\,dx , \quad f(w) := -\int_\Omega w\,dx.}
\end{eqnarray}
The associated \emph{inverse problem} is
\begin{eqnarray}
\label{eq:inverse problem_RBLpaper}
\mbox{for }u\in L^2(\Omega)\mbox{ find }\sigma\in H_+^2(\Omega)\mbox{ such that }\mathcal{F}(\sigma) = u.
\end{eqnarray}
Typically, instead of $u$ a noisy measurement $u^\delta\in L^2(\Omega)$ is known, with $\norm{u^\delta - u}_{L^2(\Omega)}\leq\delta$ and noise level $\delta >0$, such that regularization techniques have to be applied since simple inversion fails due to the ill-posedness of the problem. Throughout this paper we assume the knowledge of $\delta$.
\begin{Remark}\hfill
\label{remark:PDE_RBLpaper}
\begin{enumerate}[(i)]
\item For $\sigma\in H_+^2(\Omega)$ we have $\vert b(u,u;\sigma)\vert \geq \alpha(\sigma)\norm{u}_{L^2(\Omega)}^2$, with $\alpha(\sigma) := \frac{\ei \sigma(x)}{C_{PF}^2} > 0$, where $C_{PF}$ is the Poincar\'{e}-Friedrich constant of $\Omega$. Therefore, the bilinear form $b(\cdot , \cdot\, ;\sigma)$ is coercive for all $\sigma\in H_+^2(\Omega)$. 
Since $H^2(\Omega)$ embeds continuously into $L^\infty(\Omega)$, $b$ is also continuous for every $\sigma\in H_+^2(\Omega)$ with continuity constant $\gamma(\sigma) := \esup \sigma(x)<\infty$. The linear form $f$ is continuous as well such that the Lemma of Lax-Milgram guarantees existence and uniqueness of a solution of \req{eq:forward_operator_RBLpaper} in $H_0^1(\Omega)$.
\item $L^2(\Omega)$ is chosen as the image space of $\mcF$ because we consider it to be more realistic for a measurement $u^\delta\in L^2(\Omega)$ to be close to the exact data $u\in H_0^1(\Omega)$ in the $L^2$-norm rather than in the $H^1$-norm.
\item We acknowledge that the inverse problem stated in \req{eq:inverse problem_RBLpaper} is actually a linear problem since the data $u\in L^2(\Omega)$ is known over the whole domain $\Omega$. The problem becomes truly nonlinear if the data is only known on a proper subdomain $\tilde{\Omega}\subset\Omega$. We formulate this \emph{partial inverse problem}
\begin{eqnarray}
\label{eq:partial_inverse problem_RBLpaper}
\mbox{for }u\in L^2(\tilde{\Omega})\mbox{ find }\sigma\in H_+^2(\Omega)\mbox{ such that }\tilde{\mcF}(\sigma) = u,
\end{eqnarray}
with $\tilde{\mcF}:=E \circ\mcF$ and a restriction operator $E:L^2(\Omega)\longrightarrow L^2(\tilde{\Omega})$. Since the scope of this work is the connection of iterative regularization methods and the reduced basis method, we will continue to consider \req{eq:inverse problem_RBLpaper} for reasons of simplicity. Still, the resulting method derived in Section \ref{sec:RBL_RBLpaper} is applicable to the partial problem \req{eq:partial_inverse problem_RBLpaper} and we will provide corresponding  numerical results in Section \ref{subsec:RBL_Exp_RBLpaper}.
\item We mention that in the chosen setting (but also in general) the inverse problem \req{eq:inverse problem_RBLpaper} (and more so \req{eq:partial_inverse problem_RBLpaper}) is not uniquely solvable. In \cite{Ito_Kunisch_aex} Ito and Kunisch provide an overview of existing results on this topic and show the injectivity of $\mcF$ with respect to a reference parameter under certain assumptions. A recent result on the uniqueness of \req{eq:inverse problem_RBLpaper} with $C^2$-parameter is given by Knowles \cite{knowles1999uniqueness}, where his techniques are based on the work of Richter \cite{richter1981uniqueness}.
\end{enumerate}
\end{Remark}

If we consider $\mcD(\mcF) = L_+^\infty(\Omega)\subset L^2(\Omega)$ as definition space of $\mcF$, it is a well-known result that for each $\sigma\in L_+^\infty(\Omega)$ and each direction $\kappa\in L^2(\Omega)$ with $\sigma + \kappa \in L_+^\infty(\Omega)$ (note that $L_+^\infty(\Omega)$ is not an open subset of $L^2(\Omega)$)
\begin{eqnarray}
\label{eq:fd_property_Linfty_RBLpaper}
\lim_{\norm{\kappa}_\infty \rightarrow 0}\frac{\norm{\mcF(\sigma + \kappa) - \mcF(\sigma) - \mcF'(\sigma)\kappa}_{L^2(\Omega)}}{\norm{\kappa}_\infty} = 0
\end{eqnarray}
holds, with a linear and continuous operator $\mcF'(\sigma)$ given by
\begin{eqnarray}
\label{eq_frechetderivative_RBLpaper}
\eqalign{
\mcF'(\sigma)(\cdot):L^2(\Omega) \longrightarrow L^2(\Omega)\\
\mcF'(\sigma)\kappa = v\quad v\in H_0^1(\Omega)\subset L^2(\Omega) \mbox{ solving}\\
b(v,w;\sigma) = g(w;\kappa)\quad\mbox{ for all }w\in H_0^1(\Omega),\\
b(v,w;\sigma) := \int_\Omega \sigma \nabla v \cdot \nabla w\,dx , \quad g(w;\kappa) := -\int_\Omega \kappa \nabla u^\sigma \cdot \nabla w \,dx,
}
\end{eqnarray}
where $u^\sigma$ abbreviates $\mcF(\sigma)$. The Appendix of this article contains a proof of the above statement. 
Since $H^2(\Omega)$ embeds continuously into $L^\infty(\Omega)$ and $H_+^2(\Omega)$ is an open subset of $H^2(\Omega)$, \req{eq:fd_property_Linfty_RBLpaper} holds for every $\sigma\in H_+^2(\Omega)$ and $\kappa\in H^2 (\Omega)$, where $\mcF'(\sigma)$  considered as an operator from $H^2(\Omega)$ to $L^2(\Omega)$ is the \FD of $\mcF$.
 
To numerically solve \req{eq:inverse problem_RBLpaper}, we consider the nonlinear Landweber iteration that is based on the fix point equation 
\begin{eqnarray*}
\sigma = \xi(\sigma) := \sigma + \omega\mathcal{F}'(\sigma)^*(u - \mathcal{F}(\sigma)),
\end{eqnarray*}
where $\mathcal{F}'(\sigma)^*$ denotes the adjoint of $\mathcal{F}'(\sigma)$ and $\omega >0$ is a damping parameter.
With given noisy data $u^\delta\in L^2(\Omega)$ we can only expect to reconstruct an approximative solution $\sigma^\delta$ to an exact solution $\sigma^+\in H_+^2(\Omega)$ of \req{eq:inverse problem_RBLpaper}. The \emph{damped nonlinear Landweber iteration} is defined via
\begin{eqnarray}
\label{eq:Landweber_Iter_RBLpaper}	
\sigma_{n+1}^\delta = \sigma_n^\delta + \omega\mathcal{F}'(\sigma_n^\delta)^*(u^ \delta - \mathcal{F}(\sigma_n^\delta)),\quad n = 0,1,\dots
\end{eqnarray}
with starting value $\sigma_0^\delta\in H_+^2(\Omega)$, which may incorporate a-priori knowledge of $\sigma^+$, and damping parameter $\omega$ chosen as $\omega\leq \norm{\mcF'(\sigma^+)}^{-2}$. 
Since we consider noisy data, the iteration \req{eq:Landweber_Iter_RBLpaper} has to be stopped properly to prevent error amplification. We choose the well-known discrepancy principle: accept the iterate $\sigma_{n^*(\delta, u^\delta)}^\delta$ as a solution to \req{eq:inverse problem_RBLpaper}, if it fulfills
\begin{eqnarray}
\label{eq:discrepancy_principle_RBLpaper}
\fl
\norm{\mathcal{F}(\sigma_{n^*(\delta, u^\delta)}^\delta) - u^\delta}_{L^2(\Omega)} \leq \tau \delta \leq \norm{\mathcal{F}(\sigma_n^\delta) - u^\delta}_{L^2(\Omega)},\, n=0,1,\dots , n^*(\delta, u^\delta)-1,
\end{eqnarray}
with $\tau > 2$. In this setting the damped nonlinear Landweber iteration applied to \req{eq:inverse problem_RBLpaper} for noisy data is known to locally converge.

\begin{Proposition}
\label{Prop:Landweber_conv_RBLpaper}
Let $\sigma^+\in\mcD(\mcF)$ be a solution of $\mcF(\sigma) = u$. Then, there exists a radius $\rho > 0$ such that the following holds for every starting value $\sigma_0^\delta\in\mcB_\rho(\sigma^+)$: if the damped nonlinear Landweber iteration applied to noisy data $u^\delta\in L^2(\Omega)$ is stopped with $n^*(\delta, u^\delta)$ according to \req{eq:discrepancy_principle_RBLpaper}, then $\sigma_{n^*(\delta, u^\delta)}^\delta$ converges to some solution $\hat{\sigma}$ of $\mcF(\sigma) = u$ as $\delta\rightarrow 0$.
\end{Proposition}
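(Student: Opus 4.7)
The plan is to reduce the statement to the classical local convergence theorem for the nonlinear Landweber iteration due to Hanke, Neubauer and Scherzer \cite{Hanke_Neubauer_Scherzer_Land_conv}, whose hypotheses consist of (i) Fr\'echet differentiability of $\mcF$ on an open neighborhood of $\sigma^+$, (ii) a scaling condition guaranteeing $\omega\,\|\mcF'(\sigma)\|^2\le 1$ on that neighborhood, and (iii) the tangential cone condition
\begin{equation*}
\|\mcF(\tilde\sigma)-\mcF(\sigma)-\mcF'(\sigma)(\tilde\sigma-\sigma)\|_{L^2(\Omega)} \le \eta\,\|\mcF(\tilde\sigma)-\mcF(\sigma)\|_{L^2(\Omega)}
\end{equation*}
with some $\eta<\tfrac{1}{2}$ uniformly on a ball $\mcB_\rho(\sigma^+)\subset H_+^2(\Omega)$. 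Once these are in place, the cited theorem yields that the discrepancy principle stops after finitely many steps and that $\sigma_{n^*(\delta,u^\delta)}^\delta$ converges to a solution of $\mcF(\sigma)=u$ as $\delta\to 0$.

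Point (i) is already available: the Fr\'echet derivative is given by \req{eq_frechetderivative_RBLpaper}, and since $H_+^2(\Omega)$ is open in $H^2(\Omega)\hookrightarrow L^\infty(\Omega)$, the map $\mcF$ is Fr\'echet differentiable on a whole neighborhood of $\sigma^+$. For (ii), starting from $b(v,w;\sigma)=g(w;\kappa)$ and testing with $w=v$, the coercivity constant $\alpha(\sigma)=\ei\sigma/C_{PF}^2$ together with the continuity bound $|g(w;\kappa)|\le\|\kappa\|_{L^\infty}\|\nabla u^\sigma\|_{L^2}\|\nabla w\|_{L^2}$ gives $\|\mcF'(\sigma)\kappa\|_{L^2}\le C(\sigma)\|\kappa\|_{H^2}$ with $C(\sigma)$ depending continuously on $\ei\sigma$ and $\esup\sigma$. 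Shrinking $\rho$ keeps $\|\mcF'(\sigma)\|\le\|\mcF'(\sigma^+)\|+\epsilon$ on the ball, so the required damping $\omega\le\|\mcF'(\sigma^+)\|^{-2}$ enforces (ii) up to an arbitrarily small slack.

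The main obstacle is the tangential cone condition (iii). My approach is to subtract the weak equations satisfied by $u^{\tilde\sigma}$, $u^\sigma$ and $\mcF'(\sigma)(\tilde\sigma-\sigma)$ to identify the residual $r:=\mcF(\tilde\sigma)-\mcF(\sigma)-\mcF'(\sigma)(\tilde\sigma-\sigma)\in H_0^1(\Omega)$ as the weak solution of
\begin{equation*}
b(r,w;\sigma)=-\int_\Omega(\tilde\sigma-\sigma)\nabla(u^{\tilde\sigma}-u^\sigma)\cdot\nabla w\,dx\qquad\forall w\in H_0^1(\Omega).
\end{equation*}
Coercivity of $b(\cdot,\cdot;\sigma)$ and the embedding $\|\tilde\sigma-\sigma\|_{L^\infty}\le C\|\tilde\sigma-\sigma\|_{H^2}$ yield $\|r\|_{H^1}\le C\|\tilde\sigma-\sigma\|_{H^2}\|u^{\tilde\sigma}-u^\sigma\|_{H^1}$; the delicate part is to upgrade this to an $L^2$-estimate and, more importantly, to bound the right-hand side by $\|u^{\tilde\sigma}-u^\sigma\|_{L^2}=\|\mcF(\tilde\sigma)-\mcF(\sigma)\|_{L^2}$ rather than by the stronger $H^1$-norm. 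This is achieved by an Aubin--Nitsche type duality argument exploiting the $H^2$-regularity of the dual problem, which relies precisely on the assumed $C^2$-boundary of $\Omega$ and the $H^2$-framework adopted from \cite{Ito_Kunisch_aex,Hanke_97_a_example_2d,Kaltenbacher_Banachspaces_a_example_2d}. The outcome is an estimate of the form $\|r\|_{L^2}\le C\rho\,\|\mcF(\tilde\sigma)-\mcF(\sigma)\|_{L^2}$ on $\mcB_\rho(\sigma^+)$.

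With (i)--(iii) at hand, choose $\rho$ so small that $\eta:=C\rho<\tfrac12$ and moreover $\tau>2(1+\eta)/(1-2\eta)$, which is compatible with the stated assumption $\tau>2$. The Hanke--Neubauer--Scherzer theorem then gives finite stopping index $n^*(\delta,u^\delta)<\infty$, monotone decrease of $\|\sigma_n^\delta-\sigma^+\|_{H^2}$ prior to stopping, and convergence of $\sigma_{n^*(\delta,u^\delta)}^\delta$ to a solution $\hat\sigma$ of $\mcF(\sigma)=u$ as $\delta\to 0$, completing the proof.
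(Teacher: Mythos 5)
Your overall strategy coincides with the paper's: verify the hypotheses of the classical Landweber convergence theorem (Fr\'echet differentiability on an open neighbourhood, the scaling of $\omega$, and the tangential cone condition with $\eta<\tfrac12$ on a ball $\mcB_\rho(\sigma^+)\subset H_+^2(\Omega)$), then invoke that theorem. Steps (i) and (ii) are handled the same way in both arguments. The divergence is at step (iii), which is the entire mathematical substance of the proposition: the paper does not prove the nonlinearity estimate at all but cites Hanke's Corollary~3.2 from \cite{Hanke_97_a_example_2d}, which gives exactly
$\norm{\mcF(\sigma)-\mcF(\tilde\sigma)-\mcF'(\tilde\sigma)(\sigma-\tilde\sigma)}_{L^2(\Omega)}\le C\norm{\sigma-\tilde\sigma}_{H^2(\Omega)}\norm{\mcF(\sigma)-\mcF(\tilde\sigma)}_{L^2(\Omega)}$
on a ball where $\ei\sigma$ is bounded below and $\norm{\sigma}_{H^2}$ above, from which the cone condition follows by shrinking the radius. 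You instead attempt to derive this estimate directly. Your identification of the residual $r$ as the weak solution of $b(r,w;\sigma)=-\int_\Omega(\tilde\sigma-\sigma)\nabla(u^{\tilde\sigma}-u^\sigma)\cdot\nabla w\,dx$ is correct (it is the same computation as \req{App_proof_FD:eq3_RBLpaper} in the Appendix), but the decisive step --- trading the $H^1$-norm of $u^{\tilde\sigma}-u^\sigma$ on the right-hand side for its $L^2$-norm --- is only asserted (``achieved by an Aubin--Nitsche type duality argument''). That step is precisely where all the difficulty lives: one must solve the dual problem for arbitrary $L^2$ data, use $H^2$-regularity (hence the $C^2$-boundary), integrate by parts to move a derivative off $u^{\tilde\sigma}-u^\sigma$, and control $\nabla\cdot((\tilde\sigma-\sigma)\nabla\psi)$ in $L^2$ via two-dimensional Sobolev product estimates such as $H^2\hookrightarrow W^{1,4}$ and $H^1\hookrightarrow L^4$. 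As written this is a gap if the proof is meant to be self-contained; citing Hanke's Corollary~3.2, as the paper does, closes it immediately. Two minor remarks: your energy estimate as stated bounds $\norm{r}_{H^1}$ by $\norm{\tilde\sigma-\sigma}_{H^2}\norm{u^{\tilde\sigma}-u^\sigma}_{H^1}$ only after using coercivity with respect to the $H^1$- (not $L^2$-) norm, so be explicit about which norm the Poincar\'e--Friedrichs step is applied to; and your stopping-parameter condition $\tau>2(1+\eta)/(1-2\eta)$ matches the hypothesis of \cite[Theorem 11.5]{engl1996regularization}, whereas the paper writes $\tau>2(1+\eta)/(1-\eta)$ --- your version is the safe one.
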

\begin{proof}
Since $H^2(\Omega)$ embeds continuously into $L^\infty(\Omega)$ and therefore $H_+^2(\Omega)$ is open as mentioned in the beginning of the section, we can always find an open ball $\mcB_{r_1}(\sigma^+)\subset\mcD(\mcF) = H_+^2(\Omega)$ around $\sigma^+$ with $r_1 >0$ such that $\ei(\sigma) > c_1$ for all $\sigma\in \mcB_{r_1}(\sigma^+)$, where $c_1$ depends on $\ei(\sigma^+)$. Furthermore, the triangle inequality yields $\norm{\sigma}_{H^2(\Omega)}\leq \norm{\sigma - \sigma^+}_{H^2(\Omega)} + \norm{\sigma^+}_{H^2(\Omega)} < r_1 + \norm{\sigma^+}_{H^2(\Omega)} =: c_2$ for every $\sigma\in\mcB_{r_1}(\sigma^+)$. We mention that $H_+^2(\Omega)$ is convex. Hanke showed in \cite[Corollary $3.2$]{Hanke_97_a_example_2d} that
\begin{eqnarray*}
\fl
\| \mathcal{F}(\sigma) - \mathcal{F}(\tilde{\sigma}) - \mathcal{F}'(\tilde{\sigma})(\sigma-\tilde{\sigma})\|_{L^2(\Omega)} \leq C\norm{\sigma - \tilde{\sigma}}_{H^2(\Omega)}\norm{\mathcal{F}(\sigma) - \mathcal{F}(\tilde{\sigma})}_{L^2(\Omega)}
\end{eqnarray*}
holds for all $\sigma,\tilde{\sigma}\in \mcB_{r_1}(\sigma^+)$, where $C$ depends on $c_1,\,c_2$ and $\Omega$. Therefore, there exist $0 < r_2 \leq r_1$ and $\eta<\frac{1}{2}$ such that the \emph{tangential cone condition}
\begin{eqnarray*}
\| \mathcal{F}(\sigma) - \mathcal{F}(\tilde{\sigma}) - \mathcal{F}'(\tilde{\sigma})(\sigma-\tilde{\sigma})\|_{L^2(\Omega)} \leq \eta\|\mathcal{F}(\sigma) - \mathcal{F}(\tilde{\sigma})\|_{L^2(\Omega)}
\end{eqnarray*}
is true for all $\sigma,\tilde{\sigma}\in \mcB_{r_2}(\sigma^+)$. We now choose $\tau$ in \req{eq:discrepancy_principle_RBLpaper} as
\begin{eqnarray}
\label{eq:tau_choice_RBLpaper}
\tau > 2\frac{1+\eta}{1-\eta} > 2.
\end{eqnarray}
Finally, we can find $0 < r_3\leq r_2$ such that $\sqrt{\omega}\norm{\mathcal{F}'(\sigma)} \leq  \frac{\norm{\mcF'(\sigma)}}{\norm{\mcF'(\sigma^+)}} \leq 1$ for all $\sigma\in\mathcal{B}_{r_3}(\sigma^+)$. We now choose $\rho = \frac{r_3}{3}$ such that for every $\sigma_0^\delta\in \mcB_\rho(\sigma^+)$ all assumptions of \cite[Theorem $11.5$]{engl1996regularization} are fulfilled and the assertion follows.
\end{proof}
\begin{Remark}\hfill
\label{Remark:LW_conv_RBLpaper}
\begin{enumerate}[(i)]
\item Hanke \cite{Hanke_LW_13_tau1} extends the convergence result of \cite[Theorem $11.5$]{engl1996regularization} to choices $\tau > 1$. In the same article Hanke also mentions that the solution $\hat{\sigma}$ found with Theorem \ref{Prop:Landweber_conv_RBLpaper} depends on the starting value $\sigma_0^\delta$ and does not need to coincide with $\sigma^+$ if $F'(\sigma^+)$ happens to have a nontrivial null space.
\item Since in a practical application $\sigma^+$ is unknown, we will make a heuristic choice of the damping parameter $\omega$ in Section \ref{subsec:RBL_Exp_RBLpaper}.
\end{enumerate}
\end{Remark}


\section{Reduced basis methods}
\label{sec:chapter3_RBLpaper}
Before we introduce the key ingredients of the classical reduced basis method and for further numerical treatment, we discretize our model problem.

\subsection{Discretization}
\label{subsec:Discretization_RBLpaper}
We introduce a standard finite element space and a discrete parameter space. Note that the unit square, despite lacking a $C^2$-boundary, still meets the demands on the domain required for the theory in Section \ref{sec:Problem formulation_RBLpaper}. Therefore, we choose $\Omega := [0,1]^2$ as computational domain for the remainder of this article.

\begin{Definition}
\label{def:X_H_RBLpaper}
For a given $n\in\N,\,n\geq 2$, we choose a uniform triangulation of $\Omega$ with $(n+2)^2$ grid nodes $x_i$ and $I_{in}$ the index set of inner nodes. We use piecewise linear nodal basis functions, denoted as $\varphi_i,\, i \in I_{in}$, on the inner nodes. The discrete function space $Y$ then is defined via
\begin{eqnarray*}
Y := \{u:\Omega\rightarrow \R\mid u(x) = \sum_{i \in I_{in}} u_i\varphi_i(x),\, u_i\in\R,\, i \in I_{in}\}.
\end{eqnarray*}
$Y$ is equipped with the $L^2$-norm and and for $u\in Y$ let $\bi{u} = (u_i)_{i\in I_{in}}\in\R^{n^2}$ denote the vector of coefficients.
\end{Definition}

\begin{Definition}
\label{def:mcP_RBLpaper}
For a given square number $p=q^2,\,q\in\N$, we divide $\Omega$ into a uniform partition of $p$ squared subdomains $\Omega_i,\, i =1,\dots ,p$, and define $\mcP_p$ via
\begin{eqnarray*}
\mcP_p := \{\sigma:\Omega\rightarrow\R \mid \sigma(x) = \sum_{i =1}^p \sigma_i\, \chi_{\Omega_i}(x),\, \sigma_i \in \R_+ := (0,\infty),\, i=1,\dots p \},
\end{eqnarray*}
with $\chi_{\Omega_i}$ being the characteristic function on the subdomain $\Omega_i$. $\mcP_p$ is equipped with the $L^2$-norm and for $\sigma\in\mcP_p$ let $\bsigma = (\sigma_i)_{i=1}^p\in\Rpp$ denote the vector of coefficients. 
\end{Definition}

The following discrete problems will have $\Rpp$ and $\R^p$ as definition space. Note the isomorphisms between $\Rpp$ and $\mcP_p$ as well as $\R^p$ and its (analogously to Definition \ref{def:mcP_RBLpaper} defined) function space.
Furthermore, we recall that $b,f$ and $g$ are the bilinear and linear forms introduced in Section \ref{sec:Problem formulation_RBLpaper} and the stability constants of $b,f$ and $g$ carry over to $Y$ and $\Rpp$ such that existence and uniqueness of the discrete problems are guaranteed via Lax-Milgram. The \emph{discrete forward operator} is given by
\begin{eqnarray}
\label{eq:discfp_RBLpaper}
\eqalign{
F :\Rpp \longrightarrow Y,\quad\bsigma\longmapsto u^{\sigma},\,\bi{u} = \left(u_i^\sigma\right)_{i \in I_{in}}\mbox{ solving}\\
\bi{B}(\bsigma)\bi{u} = \bi{f}\mbox{ with }
(\bi{B}(\bsigma))_{ij} := b(\varphi_i , \varphi_j;\sigma),\,
(\bi{f})_i := f(\varphi_i),\, i,j \in I_{in}.}
\end{eqnarray}
The associated \textit{discrete inverse problem} is
\begin{eqnarray}
\label{eq:discip_RBLpaper}
\mbox{for }u^\sigma\in Y\mbox{ find }\bsigma\in \Rpp\mbox{ such that \req{eq:discfp_RBLpaper} is fulfilled}.
\end{eqnarray}
In the upcoming sections $u^\sigma$, the solution of \req{eq:discfp_RBLpaper}, will simply be denoted by $u$. For $\bsigma\in \Rpp$, the \emph{Jacobian} $F'(\bsigma)$ is given by
\begin{eqnarray}
\label{eq:discfd_RBLpaper}
\eqalign{
F'(\bsigma)(\cdot) :\R^p \longrightarrow Y,\quad\bkappa\longmapsto v_\kappa^\sigma,\,\bi{v} = \left(v_{\kappa,i}^\sigma\right)_{i \in I_{in}}\mbox{ solving}\\
\bi{B}(\bsigma)\bi{v} = \bi{g}(\bkappa)\mbox{ with }\bi{B}(\bsigma)\mbox{ as in \req{eq:discfp_RBLpaper} and }(\bi{g}(\bkappa))_i := g(\varphi_i;\kappa),\, i \in I_{in}.}
\end{eqnarray}
\begin{Remark}\hfill
\label{remark:discparamsapce_RBL_paper}
\begin{enumerate}[(i)]
\item The discrete setting introduced in this section deviates from the continuous setting introduced in Section \ref{sec:Problem formulation_RBLpaper}, where the forward operator $\mcF$ was a mapping from $H_+^2(\Omega)$ to $L^2(\Omega)$. Here, the discrete forward operator $F$ is a mapping from a finite subspace $\mcP_p$ of $L^2(\Omega)$ into another finite subspace $Y$ of $L^2(\Omega)$. This choice is made since it resembles the common numerical setting for the inverse problem tackled in this paper, where no continuity for the searched for diffusion coefficient can be assumed.
Do note that due to this choice the result of Proposition \ref{Prop:Landweber_conv_RBLpaper} does not need to hold in this discrete setting. Also, to our knowledge, it is an open question if the tangential cone condition required in the proof of Proposition \ref{Prop:Landweber_conv_RBLpaper} holds in this discrete setting.
\item Since we use the $L^2$-norm instead of the energy-norm on $Y$, it is $\alpha(\bsigma) := \frac{\min(\bsigma)}{C_{PF}^2} > 0$, for all $\sigma\in \mcP_p$, the coercivity constant of $b$ with respect to $Y$. For $\Omega = [0,1]^2$ we refer to the proof of \cite[Thm. $6.30$]{adams2003sobolev} and choose $C_{PF} = \frac{1}{\sqrt{2}}$ such that we use $\alpha(\bsigma) = 2\min(\bsigma) > 0$ for all $\sigma\in \mcP_p$ throughout this article.
\end{enumerate}
\end{Remark}

\subsection{The reduced basis method}
\label{subsec:RBM_RBLpaper}
Reduced basis methods aim at constructing a low-dimensional subspace $Y_N$ of $Y$, with $N = \dim Y_N \ll \dim Y = n^2$,  such that the reduced basis solution $u_N$ is an accurate approximation of $u$, the high-dimensional forward solution of \req{eq:discfp_RBLpaper}. Typically $Y_N$ will consist of so called \textit{snapshots} that are solutions of \req{eq:discfp_RBLpaper} to \emph{meaningful}  parameters. We will not discuss the construction of $Y_N$ in this section but assume a reduced basis space to be given. In order to give a brief overview of the reduced basis method, this section is kept very generic. For a detailed survey of the reduced basis method we refer to \cite{RB_master_paper, Ha14RBTut}. 

\begin{Definition}
\label{def:reducedprob_RBLpaper}
Let a forward operator \req{eq:discfp_RBLpaper} and a reduced basis space $Y_N \subset Y$, with $\dim Y_N = N$ and basis $\Psi_N := \{\psi_1, \dots , \psi_N\}$, be given. We define the \emph{discrete reduced forward operator}
\begin{eqnarray}
\label{eq:reduceddiscfp_RBLpaper}
\fl
\eqalign{
F_N :\Rpp \longrightarrow Y_N,\quad\bsigma\longmapsto u_N^{\sigma}= \sum_{i = 1}^N u_{N,i}^\sigma \psi_i,\,\bi{u}_N = (u_{N,i}^\sigma)_{i = 1}^N\mbox{ solving}\\
\bi{B}_N(\bsigma) \bi{u}_N = \bi{f}_N\mbox{ with }(\bi{B}_N(\bsigma))_{ij} := b(\psi_i , \psi_j;\sigma),\,
(\bi{f}_N)_i := f(\psi_i),\, i,j= 1,\dots ,N.}
\end{eqnarray}
We call $u_N^\sigma$ the \emph{reduced basis approximation} and will often write $u_N$ instead.
\end{Definition}
\begin{Remark}\hfill
\label{remark:redprob_RBLpaper}
\begin{enumerate}[(i)]
\item Existence and uniqueness of \req{eq:reduceddiscfp_RBLpaper} follow from the properties of \req{eq:discfp_RBLpaper}.
\item If the reduced basis $\Psi_N$ is orthonormal, $\mathrm{cond}(\bi{B}_N(\bsigma))\leq \frac{\gamma(\bsigma)}{\alpha(\bsigma)}$ holds independent of $N$ with $\bi{B}_N(\bsigma)$ defined in \req{eq:reduceddiscfp_RBLpaper}.
\end{enumerate}
\end{Remark}
For the sake of completeness, we include the proof of the well-known rigorous error estimator for the reduced basis error, here measured in the $L^2$-norm, $\norm{u - u_N}_{L^2(\Omega)}$, cf. \cite{RB_master_paper} or \cite[Proposition $2.15$ \& $2.19$]{Ha14RBTut}.
\begin{Lemma}
\label{Lemma:errest_RBLpaper}
For $\sigma\in\mcP_p$ we define the residual $r(\cdot ; \sigma)\in Y'$ via
\begin{eqnarray*}
r(v;\sigma) := f(v) - b(u_N,v;\sigma),\quad v\in Y.
\end{eqnarray*}
Then, let $v_r\in Y$ denote the Riesz-representative of $r(\cdot ;\sigma)$, i.e.,
\begin{eqnarray*}
\sp{v_r}{v}_{L^2(\Omega)} = r(v;\sigma),\, v\in Y,\quad \norm{v_r}_{L^2(\Omega)} = \norm{r(\cdot ; \sigma)}_{Y'}.
\end{eqnarray*}
Then, the error $u - u_N\in Y$ is bounded for all $\sigma\in\mcP_p$ by
\begin{eqnarray}
\label{eq:errest_RBLpaper}
\norm{u - u_N}_{L^2(\Omega)} \leq \Delta_N(\bsigma) := \frac{\norm{v_r}_{L^2(\Omega)}}{\alpha(\bsigma)}.
\end{eqnarray}
\end{Lemma}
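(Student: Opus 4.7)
The plan is to use the standard residual-based argument: identify an equation that the error $e := u - u_N$ satisfies, test it against $e$ itself, and combine the coercivity of $b(\cdot,\cdot;\sigma)$ with the Riesz identity to extract the bound.

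First I would exploit Galerkin orthogonality on the full space. Since $u\in Y$ is the solution of \req{eq:discfp_RBLpaper}, we have $b(u,v;\sigma)=f(v)$ for every $v\in Y$. Subtracting $b(u_N,v;\sigma)$ from both sides gives
\begin{equation*}
b(u-u_N,v;\sigma) = f(v)-b(u_N,v;\sigma) = r(v;\sigma)\quad\text{for all }v\in Y.
\end{equation*}
In particular $u_N\in Y_N\subset Y$, so the identity is legitimate and it encodes the error in terms of the computable residual functional.

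Next I would test against $v=e=u-u_N\in Y$ and invoke coercivity (Remark \ref{remark:disccoercivity_RBLpaper}) on the left and the Riesz representation on the right. Coercivity yields $\alpha(\bsigma)\,\|e\|_{L^2(\Omega)}^2 \leq |b(e,e;\sigma)|$, and by the defining property of $v_r$,
\begin{equation*}
|r(e;\sigma)| = |\sp{v_r}{e}_{L^2(\Omega)}| \leq \|v_r\|_{L^2(\Omega)}\|e\|_{L^2(\Omega)}
\end{equation*}
via Cauchy--Schwarz. Chaining these two inequalities gives
\begin{equation*}
\alpha(\bsigma)\,\|e\|_{L^2(\Omega)}^2 \leq \|v_r\|_{L^2(\Omega)}\|e\|_{L^2(\Omega)},
\end{equation*}
and dividing by $\|e\|_{L^2(\Omega)}$ (if it vanishes the bound is trivial) yields the claimed estimate $\|u-u_N\|_{L^2(\Omega)}\leq \|v_r\|_{L^2(\Omega)}/\alpha(\bsigma) = \Delta_N(\bsigma)$.

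There is essentially no real obstacle here; the only subtlety worth flagging is that, because both the exact problem and the reduced problem live in $Y$ (rather than, say, $H_0^1$), the coercivity constant $\alpha(\bsigma)$ must be the one with respect to the $L^2$-norm that was fixed in Remark \ref{remark:disccoercivity_RBLpaper}. Using any other coercivity constant would change the denominator and spoil the rigour of the estimator.
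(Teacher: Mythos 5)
Your proposal is correct and follows essentially the same argument as the paper: derive the error--residual identity $b(e,v;\sigma)=r(v;\sigma)$, test with $e$, and combine $L^2$-coercivity with the Riesz/Cauchy--Schwarz bound before dividing by $\|e\|_{L^2(\Omega)}$ and $\alpha(\bsigma)$. Your extra remarks about the vanishing-error case and the choice of the $L^2$-coercivity constant are fine but not a departure from the paper's proof.
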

\begin{proof}
Introducing the notation $e := u - u_N$ we can calculate
\begin{eqnarray*}
\fl
b(e,v;\sigma) &= b(u,v;\sigma) - b(u_N,v,\sigma) = f(v) - b(u_N,v;\sigma) = r(v;\sigma)\quad \mbox{for all }v\in Y.
\end{eqnarray*}
Testing this equation with $e\in Y$ yields
\begin{eqnarray*}
\fl
\alpha(\bsigma)\norm{e}_{L^2(\Omega)}^2 &\leq b(e,e;\sigma) = r(e;\sigma) \leq \norm{r(\cdot;\sigma)}_{Y'}\norm{e}_{L^2(\Omega)} = \norm{v_r}_{L^2(\Omega)} \norm{e}_{L^2(\Omega)}.
\end{eqnarray*}
Division by $\norm{e}_{L^2(\Omega)}$ and $\alpha(\bsigma)$ concludes the proof.
\end{proof}

We want to remind the reader that this is an estimator for the error between the reduced basis approximation and the discrete forward solution.
Since the construction method for $Y_N$ in Section \ref{sec:RBL_RBLpaper} will be snapshot-based, we note an important property of such methods, the \emph{reproduction of solutions}. It guarantees exactness in the reduced basis approximation for parameters whose snapshots are part of $Y_N$.
\begin{Lemma}
\label{Lemma:recon_of_sol_RBLpaper}
Let $\sigma\in\mcP_p,\, F(\bsigma),\, F_N(\bsigma)$ be solutions of \req{eq:discfp_RBLpaper} and \req{eq:reduceddiscfp_RBLpaper} and $\bi{e}_i\in\R^N$ the $i$-th unit vector. Then the following holds
\begin{enumerate}[(i)]
\item if $F(\bsigma)\in Y_N$ then $F_N(\bsigma) = F(\bsigma)$.
\item if $F(\bsigma) = \psi_i\in \Psi_N$ then $\bi{u}_N = \bi{e}_i\in\R^N$.
\end{enumerate}
\end{Lemma}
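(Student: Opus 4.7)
The plan is to exploit the fact that the reduced basis problem is nothing but the Galerkin restriction of the discrete problem to $Y_N \subset Y$, combined with uniqueness of the solutions guaranteed by Lax--Milgram (cf.\ Remark \ref{remark:redprob_RBLpaper}(i)).

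For part (i), I would start from the discrete full-order equation $b(u,w;\sigma) = f(w)$ which holds for all $w \in Y$. Since $Y_N \subset Y$, this identity in particular holds for all test functions $w \in Y_N$. Under the hypothesis $u = F(\bsigma) \in Y_N$, this says that $u$ itself is an element of $Y_N$ satisfying the reduced variational problem $b(u,w;\sigma) = f(w)$ for all $w \in Y_N$. But that is precisely the defining equation for $F_N(\bsigma) = u_N$, and since the reduced problem admits a unique solution, I conclude $F_N(\bsigma) = u = F(\bsigma)$.

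For part (ii), I simply apply (i) to the special case $F(\bsigma) = \psi_i \in \Psi_N \subset Y_N$, obtaining $F_N(\bsigma) = \psi_i$. Expanding $F_N(\bsigma)$ in the basis $\Psi_N$ gives $\sum_{j=1}^N u_{N,j}^\sigma \psi_j = \psi_i$, and since $\Psi_N$ is linearly independent (it is a basis of $Y_N$), comparison of coefficients forces $u_{N,j}^\sigma = \delta_{ij}$, i.e.\ $\bi{u}_N = \bi{e}_i$.

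There is no real obstacle here; the statement is essentially a tautology once one observes that $u \in Y_N$ trivially lies in the reduced trial space and that the full-order weak formulation is inherited by all subspaces. The only minor care needed is to invoke uniqueness of the reduced solution (already noted in Remark \ref{remark:redprob_RBLpaper}(i)) so that a solution of the reduced variational equation is unambiguously $F_N(\bsigma)$, and the linear independence of the basis $\Psi_N$ in step (ii).
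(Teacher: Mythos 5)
Your argument is correct and is exactly the standard Galerkin reproduction-of-solutions argument that the paper invokes by reference (it simply states the lemma ``immediately follows'' from \req{eq:discfp_RBLpaper} and \req{eq:reduceddiscfp_RBLpaper}, citing \cite[Proposition~$2.16$]{Ha14RBTut}). Restricting the full-order variational identity to test functions in $Y_N$, invoking uniqueness of the reduced solution, and then using linear independence of $\Psi_N$ for part (ii) is precisely the intended proof.
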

\begin{proof}
Immediately follows from \req{eq:discfp_RBLpaper} and \req{eq:reduceddiscfp_RBLpaper}, see, e.g., \cite[Proposition $2.16$]{Ha14RBTut}.
\end{proof}

To conclude this brief overview of the reduced basis method we present both offline/online decompositions of \req{eq:reduceddiscfp_RBLpaper} and the error estimator \req{eq:errest_RBLpaper} that allow for the rapid computation of $u_N$ and $\Delta_N$. The essential assumption for those decompositions is that the bilinear form $b$ and the linear form $f$ are \emph{parameter-separable}, which is fulfilled by \req{eq:discfp_RBLpaper}. 

\begin{Corollary}
\label{Cor:Affdecomp_RBLpaper}
Using the notation of Definition \ref{def:mcP_RBLpaper}, the set $\{\sigma^{(1)}(x),\dots , \sigma^{(p)}(x) \mid \sigma^{(i)}(x) = \chi_{\Omega_i}(x),\, i = 1,\dots ,p\}$ is a basis of $\mcP_p$ with corresponding coefficient vectors $\bsigma^{(i)} = \bi{e}_i\in\R^p,\, i = 1,\dots ,p$, $\bi{e}_i$ being the $i$-th unit vector. Therefore, we can rewrite $b$ and $f$ as
\begin{eqnarray*}
b(u,v;\sigma) = \sum_{q=1}^{Q_b}\Theta_b^q(\bsigma)b^q(u,v),\quad
f(v;\sigma) = \sum_{q=1}^{Q_f}\Theta_f^q(\bsigma)f^q(v),
\end{eqnarray*}
for all $u,v\in Y$ and $\sigma\in\mcP_p$, with $Q_b = p,\,Q_f = 1$ \emph{coefficient functions} $\Theta_b^q(\bsigma) := (\bsigma)_q,\, q = 1,\dots ,p,\,\Theta_f^1(\bsigma) := 1$ and \emph{components} 
\[
b^q(u,v) := b(u,v;\sigma^{(q)}),\,q = 1,\dots p,\,f^1(v) := f(v),\, u,v\in Y.
\]
If the bilinear form $b$ and the linear form $f$ can be rewritten this way, they are said to be \emph{parameter-separable}. 
Regarding the residual we set $Q_r := Q_f + N\cdot Q_b =  1 + N\cdot p$ and define the components of the residual $r^q\in Y',\, q = 1,\dots , Q_r$ via
\begin{eqnarray*}
\fl
\eqalign{
(r^1,\dots , r^{Q_r}) &:= (f^1(\cdot),\dots , f^{Q_f}(\cdot), b^1(\psi_1,\cdot),\dots ,b^{Q_b}(\psi_1,\cdot), \dots , b^1(\psi_N,\cdot), \dots ,b^{Q_b}(\psi_N,\cdot))\\
 &= (f^1(\cdot), b^1(\psi_1,\cdot),\dots ,b^{Q_b}(\psi_1,\cdot), \dots , b^1(\psi_N,\cdot), \dots ,b^{Q_b}(\psi_N,\cdot)),
}
\end{eqnarray*}
and let $v_r^q\in Y$ denote the Riesz-representative of $r^q$. 
For $u_N^{\sigma}= \sum_{i = 1}^N u_{N,i}^\sigma \psi_i$ a solution of \req{eq:reduceddiscfp_RBLpaper} we define the corresponding coefficient functions $\Theta_r^q(\bsigma),\, q = 1,\dots , Q_r$ via
\begin{eqnarray*}
\fl
\eqalign{
(\Theta_r^1,\dots ,\Theta_r^{Q_r}) &:= (\Theta_f^1 , \dots , \Theta_f^{Q_f},  -\Theta_b^1 u_{N,1}^\sigma, \dots , -\Theta_b^{Q_b} u_{N,1}^\sigma, \dots ,-\Theta_b^1 u_{N,N}^\sigma, \dots , -\Theta_b^{Q_b} u_{N,N}^\sigma)\\
&= (1, -(\bsigma)_1 u_{N,1}^\sigma, \dots , -(\bsigma)_p u_{N,1}^\sigma, \dots ,-(\bsigma)_1 u_{N,N}^\sigma, \dots , -(\bsigma)_p u_{N,N}^\sigma).
}
\end{eqnarray*}
Using this, the residual $r$ and its Riesz-representative $v_r$ are parameter-separable as
\begin{eqnarray*}
r(v;\sigma) = \sum_{q=1}^{Q_r}\Theta_r^q(\bsigma)r^q(v),\quad
v_r(\sigma) = \sum_{q=1}^{Q_r}\Theta_r^q(\bsigma)v_r^q.
\end{eqnarray*} 
\end{Corollary}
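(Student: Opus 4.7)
The proof is essentially a sequence of direct computations, so my plan is to verify each claim in the order it is stated and then propagate the affine decompositions through the definitions of the residual and its Riesz representative.

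First I would establish that $\{\sigma^{(1)},\dots,\sigma^{(p)}\}$ is a basis of $\mcP_p$. Spanning is built into Definition \ref{def:mcP_RBLpaper}: every $\sigma\in\mcP_p$ is by definition $\sigma(x)=\sum_{i=1}^p\sigma_i\chi_{\Omega_i}(x)$. Linear independence follows because the $\Omega_i$ form a disjoint partition of $\Omega$: evaluating $\sum_i c_i\chi_{\Omega_i}\equiv 0$ on the (positive-measure) subdomain $\Omega_j$ forces $c_j=0$. The coefficient vector of $\sigma^{(i)}=\chi_{\Omega_i}$ is then $\bi{e}_i$ by inspection.

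Next I would derive the affine decomposition of $b$. Using linearity of the integral in $\sigma$ and substituting $\sigma=\sum_{q=1}^p(\bsigma)_q\chi_{\Omega_q}$ into the defining integral $b(u,v;\sigma)=\int_\Omega\sigma\nabla u\cdot\nabla v\,dx$, one obtains immediately
\begin{eqnarray*}
b(u,v;\sigma)=\sum_{q=1}^p(\bsigma)_q\int_{\Omega_q}\nabla u\cdot\nabla v\,dx=\sum_{q=1}^{Q_b}\Theta_b^q(\bsigma)\,b^q(u,v),
\end{eqnarray*}
with $Q_b=p$, $\Theta_b^q(\bsigma)=(\bsigma)_q$ and $b^q(u,v)=b(u,v;\sigma^{(q)})$, matching the stated form. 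For $f$ there is nothing to do: $f$ is $\sigma$-independent, so $Q_f=1$, $\Theta_f^1\equiv 1$, $f^1=f$.

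For the residual I would substitute the reduced basis expansion $u_N=\sum_{i=1}^N u_{N,i}^\sigma\psi_i$ into $r(v;\sigma)=f(v)-b(u_N,v;\sigma)$, then pull out the $u_{N,i}^\sigma$ by linearity of $b$ in its first argument and apply the decomposition of $b$ just obtained, producing
\begin{eqnarray*}
r(v;\sigma)=\Theta_f^1 f^1(v)+\sum_{i=1}^N\sum_{q=1}^{Q_b}\bigl(-\Theta_b^q(\bsigma)u_{N,i}^\sigma\bigr)\,b^q(\psi_i,v).
\end{eqnarray*}
Relabeling the $1+N\cdot Q_b$ summands in the order specified by the corollary yields exactly the stated $r^q$ and $\Theta_r^q$. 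Finally, for the Riesz representative I would use linearity of the Riesz isomorphism $Y'\to Y$: since $r(\cdot;\sigma)=\sum_q\Theta_r^q(\bsigma)r^q$ as elements of $Y'$ and the coefficients $\Theta_r^q(\bsigma)$ are scalars, applying the (linear) Riesz map term-by-term gives $v_r(\sigma)=\sum_q\Theta_r^q(\bsigma)v_r^q$, which completes the proof.

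There is no real obstacle here; the statement is purely bookkeeping. The only mild subtlety is keeping track of the sign convention and the index ordering when concatenating the $f$-part with the $N$ blocks of $Q_b$ residual components, which is why I would write the sum explicitly before relabeling rather than try to match the formula visually.
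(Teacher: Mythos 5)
Your proof is correct and is precisely the direct computation the paper relies on: the paper states this corollary without a written proof (it is standard bookkeeping from the reduced basis literature), and your verification — spanning and linear independence from the disjoint partition, linearity of the integral in $\sigma$ for the decomposition of $b$, substitution of the reduced basis expansion into the residual followed by relabeling, and linearity of the Riesz isomorphism — fills in exactly the intended argument. No gaps.
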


For problems that are not parameter-separable, the empirical interpolation method \cite{EIM} is available. 
The general idea of an offline/online decomposition is: compute all parameter independent quantities in a nonrecurring possibly expensive offline phase and then, for every new parameter, rapidly compute the desired quantity in the online phase. We first formulate the offline/online decomposition of \req{eq:reduceddiscfp_RBLpaper}.
\begin{Procedure}\hfill\\
\label{Proc:OffOnDecomp_RBLpaper}
\textit{Offline phase (one-time)}
\begin{enumerate}[(i)]
\item Generate reduced basis $\Psi_N = \{\psi_1,\dots ,\psi_N\}$ and $Y_N$.
\item Galerkin projection of components onto $Y_N$, i.e., compute $\bi{B}_N^q := \left(b^q(\psi_i,\psi_j)\right)_{i,j = 1}^N\in\mathbb{R}^{N\times N}$ and  $\bi{f}_N^q := \left(f^q(\psi_i)\right)_{i=1}^N\in\mathbb{R}^N$.
\end{enumerate}
\textit{Online phase (for each new $\sigma\in\mcP_p$)}
\begin{enumerate}[(i)]
\item Evaluate coefficient functions $\Theta_b^q(\bsigma)$, $\Theta_f^q(\bsigma)$, assemble $\bi{B}_{N}(\bsigma)$, $\bi{f}_{N}$ and solve linear system in \req{eq:reduceddiscfp_RBLpaper}.
\item Reconstruct reduced basis solution $u_N = \sum_{i=1}^N \bi{u}_{N,i}\psi_i$.
\end{enumerate}
\end{Procedure}

Since the online phase only involves linear combination and the solution of a linear system of dimension $N$, with $N \ll n^2$, it is very cheap. We conclude with the offline/online decomposition of the residual norm $\norm{v_r}_{L^2(\Omega)}$ and therefore the error estimator.
\begin{Procedure}\hfill\\
\label{Proc:OffOnDecompErrEst_RBLpaper}
\textit{Offline phase:} After the offline phase of Procedure \ref{Proc:OffOnDecomp_RBLpaper}, compute the matrix $\bi{G}_r := \left(\sp{v_r^q}{v_r^{q'}}_{L^2(\Omega)}\right)_{q,q' = 1}^{Q_r}$.\\
\textit{Online phase:} For given $\sigma\in \mcP_p$ and corresponding $u_N^\sigma$, evaluate $\bTheta_r := \left(\Theta_r^i(\bsigma)\right)_{i=1}^{Q_r}\in \R^{Q_r}$ and compute $\norm{v_r}_{L^2(\Omega)} = \sqrt{\bTheta_r\transp \bi{G}_r \bTheta_r}$.
\end{Procedure}


\section{Reduced Basis Landweber (RBL) method}
\label{sec:RBL_RBLpaper}
Before we develop the Reduced Basis Landweber (RBL) method, we introduce needed results and comment on the alternative direct approach that was mentioned in the Introduction of this paper.

\subsection{Preliminaries}
\label{subsec:Preliminaries_RBLpaper}
With the notation introduced in Section \ref{subsec:Discretization_RBLpaper} the nonlinear Landweber iteration defined in \req{eq:Landweber_Iter_RBLpaper} applied to \req{eq:discip_RBLpaper} is reasonable. As mentioned in Section \ref{sec:Problem formulation_RBLpaper} we consider the damped nonlinear Landweber iteration with damping parameter $\omega >0$ terminated with the discrepancy principle as it is stated in Algorithm \ref{Algo:LW_RBLpaper}.

\begin{algorithm}[h]
\caption{Landweber($\bsigma_{start},\tau$)}
\begin{algorithmic}[1]
\label{Algo:LW_RBLpaper}
\STATE{$n:=0,~\bsigma_0^\delta := \bsigma_{start}$}
\WHILE{$\norm{F(\bsigma_n^\delta) - u^\delta}_{L^2(\Omega)} > \tau\delta$}
\STATE{$\bsigma_{n+1}^\delta := \bsigma_n^\delta + \omega F'(\bsigma_n^\delta)^*(u^\delta - F(\bsigma_n^\delta))$}
\STATE{$n := n+1$}
\ENDWHILE
\RETURN{$\bsigma_{LW} := \bsigma_n^\delta$}
\end{algorithmic}
\end{algorithm}

In the upcoming sections we will write $\sigma_{LW}$ to denote the element in $\mcP_p$ corresponding to $\bsigma_{LW}$. 
We introduce a \emph{dual problem} that allows for a simple calculation of the Landweber update in line $3$ of Algorithm \ref{Algo:LW_RBLpaper}.
\begin{Proposition}
\label{Thm:FD_sp_eval_RBLpaper}
For $\bsigma\in\Rpp$, $\bkappa\in\R^p$ and $l\in Y$ it holds
\begin{eqnarray}
\label{eq:FDadj_eval_RBLpaper}
\sp{\bkappa}{F'(\bsigma)^*\,l}_2 = \sp{F'(\bsigma)\bkappa}{l}_{L^2(\Omega)} = \int_\Omega \kappa \nabla u^\sigma\cdot\nabla u_l^\sigma\, dx,
\end{eqnarray}
with $F'(\bsigma)^*$ the adjoint of $F'(\bsigma)$ and $u_l^\sigma\in Y$ the unique solution of the \emph{discrete dual problem}
\begin{eqnarray}
\label{eq:discadjprob_RBLpaper}
\eqalign{
\bi{B}(\bsigma)\bi{u} = \bi{m}(\bi{l})\mbox{ with  }
\bi{B}(\bsigma)\mbox{ as in \req{eq:discfp_RBLpaper} and}\\
(\bi{m}(\bi{l}))_i := m(\varphi_i; l) := -\int_\Omega \varphi_i\, l\,dx,\, i \in I_{in}.}
\end{eqnarray}
\end{Proposition}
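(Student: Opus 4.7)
The first equality in \req{eq:FDadj_eval_RBLpaper} is just the defining property of the adjoint, so the content of the proposition is the second equality, which I would establish by the classical adjoint (``dual-problem'') trick used throughout PDE-constrained optimization.

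I would start by unpacking notation. Set $v := F'(\bsigma)\bkappa = v_\kappa^\sigma$; by \req{eq:discfd_RBLpaper} this element of $Y$ satisfies
\[
b(v,w;\sigma) \;=\; g(w;\kappa) \;=\; -\int_\Omega \kappa\,\nabla u^\sigma\cdot\nabla w\,dx\qquad\text{for all }w\in Y,
\]
after unfolding the definition of $\bi{B}(\bsigma)\bi{v} = \bi{g}(\bkappa)$ into its weak form. Next, I would give a weak-form reading of the dual problem \req{eq:discadjprob_RBLpaper}: writing $u_l^\sigma = \sum_j (u_l^\sigma)_j \varphi_j$, the $i$-th row of $\bi{B}(\bsigma)\bi{u} = \bi{m}(\bi{l})$ becomes $b(\varphi_i,u_l^\sigma;\sigma) = -\int_\Omega \varphi_i\, l\,dx$, and because $b(\cdot,\cdot;\sigma)$ is symmetric in its first two arguments (being built from $\int \sigma \nabla\cdot\nabla\cdot$), this says
\[
b(u_l^\sigma, w;\sigma) \;=\; -\int_\Omega w\,l\,dx \qquad\text{for all }w\in Y.
\]

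The two equalities are now combined by the usual test-function swap. Testing the dual identity with $w = v$ gives $\sp{v}{l}_{L^2(\Omega)} = -b(u_l^\sigma, v;\sigma)$. Using symmetry of $b$ and then testing the Jacobian equation with $w = u_l^\sigma$ yields $b(u_l^\sigma, v;\sigma) = b(v, u_l^\sigma;\sigma) = -\int_\Omega \kappa\,\nabla u^\sigma\cdot\nabla u_l^\sigma\,dx$. Chaining these two identities produces
\[
\sp{F'(\bsigma)\bkappa}{l}_{L^2(\Omega)} \;=\; \int_\Omega \kappa\,\nabla u^\sigma\cdot\nabla u_l^\sigma\,dx,
\]
which is the claimed second equality. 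For the first equality one notes that $\sp{\bkappa}{F'(\bsigma)^*l}_2 = \sp{F'(\bsigma)\bkappa}{l}_{L^2(\Omega)}$ by the definition of the $L^2$-to-$\R^p$ adjoint (using that the discrete $L^2$ inner product on $Y$ and Euclidean inner product on $\R^p$ are the pairings for which $F'(\bsigma)^*$ is defined).

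There is no serious obstacle here; the only point demanding a little care is the direction of symmetry in the dual problem, i.e.\ checking that $\bi{B}(\bsigma)$ in \req{eq:discadjprob_RBLpaper} is used with $u_l^\sigma$ in its \emph{trial} slot while the natural weak form of the definition tests against $\varphi_i$ in the first slot. This is resolved by the symmetry of $b(\cdot,\cdot;\sigma)$ noted above, which is also why a separate ``adjoint bilinear form'' is not needed and the same matrix $\bi{B}(\bsigma)$ governs the dual problem.
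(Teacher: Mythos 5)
Your proof is correct and follows essentially the same route as the paper: rewrite the Jacobian and dual problems in weak form, test each with the other's solution, and use the symmetry of $b(\cdot,\cdot;\sigma)$ to chain the identities. Your remark on checking the trial/test slots of $\bi{B}(\bsigma)$ is a point the paper passes over silently, but the argument is the same.
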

\begin{proof}
Note that $u_l^\sigma\in Y$ solving \req{eq:discadjprob_RBLpaper} is equivalent to $u_l^\sigma$ solving $b(u_l^\sigma ,v;\sigma) = m(v)$ for all $v\in Y$. 
The first equality in \req{eq:FDadj_eval_RBLpaper} is the definition of the adjoint. The second equality follows from \req{eq:discfd_RBLpaper} and \req{eq:discadjprob_RBLpaper}
\begin{eqnarray*}
\sp{F'(\bsigma)\bkappa}{l}_{L^2(\Omega)} &= \int_\Omega F'(\bsigma)\bkappa\,l\,dx = \int_\Omega v_\kappa^\sigma\,l\, dx
  = -\int_\Omega\sigma\nabla u_l^\sigma\cdot\nabla v_\kappa^\sigma\,dx\\
 &= \int_\Omega \kappa \nabla u^\sigma\cdot \nabla u_l^\sigma\, dx.
\end{eqnarray*}
\end{proof}

Using Proposition \ref{Thm:FD_sp_eval_RBLpaper}, we calculate the Landweber update in line $3$ of Algorithm \ref{Algo:LW_RBLpaper}. 
For $\sigma\in\mcP_p$, $l\in Y$ let $u^\sigma = \sum_{i \in I_{in}} u_i^\sigma\varphi_i$ and $u_l^\sigma = \sum_{i \in I_{in}} u_{l,i}^\sigma\varphi_i\in Y$ be solutions of \req{eq:discfp_RBLpaper} and \req{eq:discadjprob_RBLpaper} with corresponding coefficient vectors $\bi{u}^\sigma = (u_i^\sigma)_{i\in I_{in}}$ and $\bi{u}_l^\sigma = (u_{l,i}^\sigma)_{i\in I_{in}}$ such that it holds for $\bkappa\in\R^p$
\begin{eqnarray}
\label{eq:discevalFDadj_RBLpaper}
\eqalign{
\sp{\bkappa}{F'(\bsigma)^*l}_2 &= \int_\Omega \kappa \nabla u^\sigma\cdot\nabla u_l^\sigma\, dx = \sum_{i,j\in I_{in}}u_i^\sigma u_{l,j}^\sigma \int_\Omega \kappa \nabla \varphi_i\cdot\nabla \varphi_j\, dx\\
 &= \sum_{i,j\in I_{in}}u_i^\sigma u_{l,j}^\sigma  b(\varphi_i , \varphi_j;\kappa) = (\bi{u}^\sigma)\transp\bi{B}(\bkappa) \bi{u}_l^\sigma.}
\end{eqnarray}

Therefore, we can evaluate $F'(\bsigma)^*l$ for given $\sigma\in\mcP_p,\, l\in Y$ by consecutively inserting a basis vector of $\R^p$ as the parameter $\bkappa$. Following Corollary \ref{Cor:Affdecomp_RBLpaper}, we choose the standard basis of $\mathbb{R}^p$ such that $\bi{B}(\bkappa)$ in \req{eq:discevalFDadj_RBLpaper} is the $k$-th component matrix $\bi{B}^k := \left(b^k(\varphi_i,\varphi_j)\right)_{i,j \in I_{in}}$ if $\bkappa$ is the $k$-th unit vector. Using this, the calculation of the Landweber update in line $3$ of Algorithm \ref{Algo:LW_RBLpaper} consists of the following steps.
\begin{Procedure}\hfill
\label{Proc:LW_update_RBLpaper}
\begin{enumerate}[1.]
\item Compute $u^{\sigma_n^\delta}$ the primal forward solution of \req{eq:discfp_RBLpaper}. Define $l :=  u^\delta - u^{\sigma_n^\delta}$.
\item Compute $u_{l}^{\sigma_n^\delta}$ the dual forward solution of \req{eq:discadjprob_RBLpaper}.
\item Evaluate the Landweber update
\begin{eqnarray*}
\left(F'(\bsigma_n^\delta)^*(u^\delta - F(\bsigma_n^\delta))\right)_k = (\bi{u}^{\sigma_n^\delta})\transp\bi{B}^k \bi{u}_{l}^{\sigma_n^\delta},\, k=1,\dots ,p.
\end{eqnarray*}
\end{enumerate}
\end{Procedure}

We conclude this preliminary section with a extensive comment on the alternative direct approach to couple reduced basis methods and the Landweber method.
\begin{Remark}
\label{remark:direct_approach}
Due to Procedure \ref{Proc:LW_update_RBLpaper}, every Landweber step contains two forward solutions and Algorithm \ref{Algo:LW_RBLpaper} as an iterative regularization algorithm provides a many-query context such that the application of reduced basis methods is intuitive. The direct approach consists of constructing one \emph{global reduced basis space}, yielding accurate reduced basis approximations for all $\sigma\in\mcP_p$, per forward problem and replacing the corresponding forward solution required in the Landweber iteration with its reduced counterpart. We note that this methodology surely could be applied to other regularization algorithms as well and that similar techniques have successfully been applied to problems with a low-dimensional parameter space, see, e.g., \cite{nguyenrozza2009reduced, nguyenPHDThesis, HoangDentalTissuereduced}. The application of this direct approach to our model problem is limited for two reasons. First, concerning \req{eq:discfp_RBLpaper}, it is possible to construct a global reduced basis space via e.g. the well-known greedy algorithm, see, e.g., \cite{veroy2003posteriori, RB_master_paper, Ha14RBTut}. Concerning \req{eq:discadjprob_RBLpaper}, due to the varying right hand side, this is not possible. Therefore, we could only speed up one of the two forward solutions such that this approach would be inefficient for the chosen model problem. Second, global reduced basis spaces can only be constructed when the parameter space is bounded and of low dimension (say up to $100$, where in \cite{Stamm_Efficient_greedy, eftang2010hp, haasdonk2011training, urban2014greedy} various methods to construct such spaces are presented) and therefore it is not feasible for our purpose that lies in imaging.
\end{Remark}

\subsection{Development of the method}
\label{subsec:the_method_RBLpaper}
One way to couple reduced basis methods and the nonlinear Landweber method is the direct approach that was discussed in Remark \ref{remark:direct_approach} which had several drawbacks. In this section we want to develop a new method that overcomes these drawbacks via  adaptive online updates of the reduced basis space during the solution process of the inverse problem. Online updates in model order reduction have also been considered recently in other contexts \cite{Druskin_Zaslavski, cuiWilcox2014datadriveninversion, lass2014PHDThesis, zahr2014progressive}.

We no longer aim at constructing a global reduced basis space that could be used for the reconstruction of every $\sigma\in\mcP_p$. Instead, for a given measurement, we aim at adaptively constructing a small problem-oriented reduced basis space $Y_{N,1}$ while also solving the associated inverse problem. Therefore,  $Y_{N,1}$ aims only at a specific yet unknown region of $\mcP_p$ that is relevant for the solution of the inverse problem. This breaks the typical offline/online framework of reduced basis methods but the resulting method will still have offline and online segments. Nevertheless, the procedure provides considerable acceleration of the computational time.

For the construction of this problem-oriented space we use the nonlinear Landweber method projected onto the current reduced basis space as a criterion to select meaningful parameters. These are then used to enrich the reduced basis space with the corresponding snapshot. 
By this choice we construct a reduced basis space that is tailored around the inverse problem in the sense that it provides accurate reduced basis approximations for parameters lying in the a priori unknown region of the parameter space that is relevant for the solution of the inverse problem.
Simultaneously the inverse problem is solved in this process.
 Since the Landweber method makes use of the adjoint of the derivative, we introduce a second reduced basis space $Y_{N,2}$ containing the required information. We gather these thoughts.

\begin{Procedure}\hfill
\label{Proc:RBL_RBLpaper}
\begin{enumerate}[1.]
\item Start with an initial guess $\sigma_{start}\in\mcP_p$ and initial possibly empty spaces $Y_{N,1},\,Y_{N,2}$.
\item Update the spaces $Y_{N,1},\,Y_{N,2}$ using the current iterate. In the first step use $\sigma_{start}$.
\item Solve the inverse problem up to a certain accuracy with the nonlinear Landweber method projected onto $Y_{N,1}$ and $Y_{N,2}$ and thus determine a new parameter for space enrichment.
\item If the current iterate fulfills \req{eq:discrepancy_principle_RBLpaper}, terminate, else go to step $2$.
\end{enumerate}
\end{Procedure}

For now, we treat the update of the reduced spaces and the projected Landweber method as modular blocks of our procedure and elaborate on these after the final algorithm has been presented. First, we need to find meaningful termination criteria for the projected nonlinear Landweber method in step $3$ of Procedure \ref{Proc:RBL_RBLpaper}. Let in the following  $\sigma\in\mcP_p$ be the current iterate of the projected method.

Terminating with a high-dimensional discrepancy principle as soon as $\norm{F(\bsigma) - u^\delta}_{L^2(\Omega)} \leq \tau\delta$ is out of question since we do not want to compute the expensive solution of \req{eq:discfp_RBLpaper} in each iteration. Instead, we want to terminate via a low-dimensional discrepancy principle as soon as $\norm{F_N(\bsigma) - u^\delta}_{L^2(\Omega)} \leq \tau\delta$. Taking a look at
\begin{eqnarray}
\label{eq:highDP_success_RBLpaper}
&\norm{F(\bsigma) - u^\delta}_{L^2(\Omega)} \leq \norm{F(\bsigma) - F_N(\bsigma)}_{L^2(\Omega)} + \norm{F_N(\bsigma) - u^\delta}_{L^2(\Omega)},\\
\label{eq:redDP_failure_RBLpaper}
&\norm{F_N(\bsigma) - u^\delta}_{L^2(\Omega)} - \norm{F_N(\bsigma) - F(\bsigma)}_{L^2(\Omega)} \leq \norm{F(\bsigma) - u^\delta}_{L^2(\Omega)},
\end{eqnarray}
we can see that $\norm{F(\bsigma) - u^\delta}_{L^2(\Omega)}$ and $\norm{F_N(\bsigma) - u^\delta}_{L^2(\Omega)}$ are connected via the reduced basis error $\norm{F_N(\bsigma) - F(\bsigma)}_{L^2(\Omega)}$ that will grow over the course of the projected Landweber iteration since each consecutive iterate will be worse and worse approximated by the current set of reduced basis spaces. Therefore, we want to control this error which can be done using the rigorous error estimator $\Delta_N$ introduced in Lemma \ref{Lemma:errest_RBLpaper}. As long as $\sigma$ does not fulfill the reduced discrepancy principle $(\tau - 2)\delta$ is a reasonable upper bound for $\Delta_N$ since it follows from \req{eq:redDP_failure_RBLpaper} that $\norm{F(\bsigma) - u^\delta}_{L^2(\Omega)} > 2\delta$ (and therefore $\sigma$ is rejected by \req{eq:discrepancy_principle_RBLpaper} as well) as long as $\Delta_N(\bsigma) \leq (\tau -2)\delta$. This is a strong motivation to suggest the termination of step $3$ of Procedure \ref{Proc:RBL_RBLpaper} if one of the following criteria is met
\begin{eqnarray}
\label{eq:RBL_alt_termination_RBLpaper}
\norm{F_N(\bsigma) - u^\delta}_{L^2(\Omega)}\leq \tau\delta\quad\mbox{or}\quad\Delta_N(\bsigma) > (\tau -2)\delta.
\end{eqnarray}

The latter \emph{alternative termination criterion} in \req{eq:RBL_alt_termination_RBLpaper} is in fact a trust region criterion: as soon as the error estimator grows too large we cannot ensure that the error $\norm{F_N(\bsigma) - F(\bsigma)}_{L^2(\Omega)}$ stays small enough, thus we do not trust the current reduced basis spaces anymore (they might not produce feasible approximations anymore such that further iterations might be misleading) and enrich them using the current iterate.

We add these thoughts to Procedure \ref{Proc:RBL_RBLpaper} and call the resulting new method \emph{Reduced Basis Landweber (RBL) method}, see Algorithm \ref{Algo:RBL_RBLpaper}. 

\begin{algorithm}[h]
\caption{RBL($\bsigma_{start},\tau,\Psi_{N,1} ,\Psi_{N,2}$)}
\begin{algorithmic}[1]
\label{Algo:RBL_RBLpaper}
\STATE{$n:=0,~\bsigma_0^\delta := \bsigma_{start},~Y_{N,1} := \mbox{span}(\Psi_{N,1}),~Y_{N,2} := \mbox{span}(\Psi_{N,2})$}
\WHILE{$\norm{F(\bsigma_n^\delta) - u^\delta}_{L^2(\Omega)} > \tau\delta$}
\STATE{compute $\psi_{n,2}$ as described in \req{eq:RBL_sensitivityupdate_RBLpaper}}
\STATE{$\Psi_{N,1} := \Psi_{N,1}\cup \{F(\bsigma_n^\delta)\},~\Psi_{N,2} := \Psi_{N,2}\cup \{\psi_{n,2}\}$}
\STATE{$Y_{N,1} = \mbox{span}\{\Psi_{N,1}\},~Y_{N,2} = \mbox{span}\{\Psi_{N,2}\}$}
\STATE{$i:=1,~\bsigma_i^\delta := \bsigma_n^\delta$}
\REPEAT
\STATE{compute $s_{n,i}$ as described in Procedure \ref{Proc:RBL_update_RBLpaper}}
\STATE{$\bsigma_{i+1}^\delta := \bsigma_i^\delta + \omega s_{n,i}$}
\STATE{$i := i+1$}
\UNTIL{$\norm{F_N(\bsigma_i^\delta) - u^\delta}_{L^2(\Omega)} \leq \tau\delta$ }\OR{ $\Delta_N(\bsigma_i^\delta) > (\tau -2)\delta$}
\STATE{$\bsigma_{n+1}^\delta := \bsigma_i^\delta$}
\STATE{$n := n+1$}
\ENDWHILE
\RETURN{$\bsigma_{RBL} := \bsigma_n^\delta$}
\end{algorithmic}
\end{algorithm}

\begin{Remark}\hfill
\label{remark:RBL_RBLpaper}
\begin{enumerate}[(i)]
\item The reduced bases $\Psi_{N,1},\,\Psi_{N,2}$ are orthonormalized to ensure numerical stability according to Remark \ref{remark:redprob_RBLpaper}.
\item Computing $\Delta_N(\bsigma_i^\delta)$ is crucial regarding the total computational time of Algorithm \ref{Algo:RBL_RBLpaper}. We will elaborate in Section \ref{subsec:RBL_Exp_RBLpaper}.
\item The alternative termination criterion in \req{eq:RBL_alt_termination_RBLpaper} guarantees that the reduced basis error stays very small. Due to \req{eq:highDP_success_RBLpaper}, we expect Algorithm \ref{Algo:RBL_RBLpaper} to terminate as soon as the inner repeat loop terminates with $\norm{F_N(\bsigma_i^\delta) - u^\delta}_{L^2(\Omega)} \leq \tau\delta$. 
\item A possible drawback of the alternative termination criterion could be the error estimator being inefficient, i.e., in the notation of Lemma \ref{Lemma:errest_RBLpaper}, $\Delta_N(\bsigma_i^\delta)/\norm{e}_{L^2(\Omega)}$ being large. This could result in a premature termination of the repeat loop, wasting possible cheap repeat loop iterations and possibly causing more than necessary expensive while loop iterations.
\item Analogous to Algorithms \ref{Algo:LW_RBLpaper} we write $\sigma_{RBL}$ to denote the element in $\mcP_p$ corresponding to $\bsigma_{RBL}$.
\end{enumerate}
\end{Remark}

We want to elaborate on line $3$ and $8$ of Algorithm \ref{Algo:RBL_RBLpaper} with respect to our chosen model problem. Regarding the space update of $Y_{N,2}$, we refer to  Procedure \ref{Proc:LW_update_RBLpaper} and choose snapshots of the dual problem for the basis update $\psi_{n,2}$ of $Y_{N,2}$
\begin{eqnarray}
\label{eq:RBL_sensitivityupdate_RBLpaper}
\mbox{enrich } \Psi_{N,2} \mbox{ with }\psi_{n,2} = u_l^{\sigma_n^\delta}\mbox{ solving \req{eq:discadjprob_RBLpaper} for }l := u^\delta - F(\bsigma_n^\delta).
\end{eqnarray}
The \emph{reduced Landweber update} in line $8$ of Algorithm \ref{Algo:RBL_RBLpaper} is done along the lines of Procedure \ref{Proc:LW_update_RBLpaper} as well: for given spaces $Y_{N,1} = \mbox{span}\{\psi_{1,1},\dots , \psi_{1,N_1 }\}$, $Y_{N,2} = \mbox{span}\{\psi_{2,1},\dots , \psi_{2,N_2 }\}$ and current iterate $\bsigma_i^\delta$, we replace the forward solutions of \req{eq:discfp_RBLpaper} and \req{eq:discadjprob_RBLpaper} with their reduced counterpart. This is summarized in the following Procedure.
\begin{Procedure}\hfill
\label{Proc:RBL_update_RBLpaper}
\begin{enumerate}[1.]
\item Compute $F_N(\sigma_i^\delta) = u_N^{\sigma_i^\delta}$ the primal reduced basis approximation via \req{eq:reduceddiscfp_RBLpaper} using $Y_{N,1}$ as reduced basis space. Define $l :=  u^\delta - u_N^{\sigma_i^\delta}$.
\item Compute the dual reduced basis approximation $u_{N,l}^{\sigma_i^\delta} = \sum_{j = 1}^{N_2}u_{N,l,j}^{\sigma_i^\delta}\psi_{2,j}$ with $\bi{u}_{N,l}^{\sigma_i^\delta} = (u_{N,l,j}^{\sigma_i^\delta})_{j = 1}^{N_2}$ solving the small linear system
\begin{eqnarray}
\label{eq:reduceddiscadjprob_RBLpaper}
\fl
\eqalign{
\tilde{\bi{B}}_N(\bsigma_i^\delta) \bi{u}_{N,l}^{\sigma_i^\delta} &= \tilde{\bi{m}}_N(\bi{l})\mbox{ with }\\
(\tilde{\bi{B}}_N(\bsigma_i^\delta))_{j,k} &:= b(\psi_{2,j} , \psi_{2,k};\sigma_i^\delta),\,
(\tilde{\bi{m}}_N(\bi{l}))_j := m(\psi_{2,j};l),\, j,k = 1,\dots , N_2.}
\end{eqnarray}
\item Evaluate the \emph{reduced Landweber update} $s_{n,i}$ via 
\begin{eqnarray*}
\label{eq:RBL_update_RBLpaper}
\left(s_{n,i}\right)_k = (\bi{u}_N^{\sigma_i^\delta})\transp\bi{Q}^k \bi{u}_{N,l}^{\sigma_i^\delta},\, k=1,\dots ,p,
\end{eqnarray*}
with $\bi{Q}^k = b^k(\psi_{1,i} , \psi_{2,j})_{i,j = 1}^{N_1,N_2}\in\R^{N_1\times N_2}$ being parameter independent.
\end{enumerate}
\end{Procedure}

Since both reduced problems \req{eq:reduceddiscfp_RBLpaper} and \req{eq:reduceddiscadjprob_RBLpaper} are offline/online decomposable, i.e., the online phases are of complexity polynomial in $N_1$ and $N_2$, independent of $n^2$, and the matrices $\bi{Q}^k,\,k=1,\dots p$, can be computed as soon as the reduced spaces are updated in line $4$, the projected reduced Landweber method in the repeat loop from line $7$ to $10$ can be implemented in an efficient and cheap way. It only consists of the online phases of \req{eq:reduceddiscfp_RBLpaper} and \req{eq:reduceddiscadjprob_RBLpaper} and the reduced Landweber update in step $3$ of Procedure \ref{Proc:RBL_update_RBLpaper}. In this sense the repeat loop is the online segment of Algorithm \ref{Algo:RBL_RBLpaper}, where we elaborate on the computational cost of the error estimator $\Delta_N$ in the upcoming section. The remainder of the algorithm is then the offline segment since, with the enrichment of the reduced basis spaces, i.e., computing solutions of \req{eq:discfp_RBLpaper} and \req{eq:discadjprob_RBLpaper}, and the projection onto the new set of reduced basis spaces, it involves computations depending on $n^2$.

We conclude this section with final remarks about the RBL method.
\begin{Remark}\hfill
\label{remark:notheory_for_RBL_RBLpaper}
\begin{enumerate}[(i)]
\item For a fixed $\sigma\in\mcP_p$ let $\sp{\cdot}{\cdot}_\sigma := b(\cdot,\cdot;\sigma)$ denote the energy scalar product and $P_\sigma : Y \longrightarrow Y_{N,1}$ the corresponding orthogonal projection. With $Y_{N,1} = \mbox{span}\{\psi_{1,1},\dots , \psi_{1,N_1 }\}$ it holds for all $i = 1,\dots , N_1$
\begin{eqnarray*}
\eqalign{
&\sp{P_\sigma(F(\sigma)) - F(\sigma)}{\psi_{1,i}}_\sigma = 0\\
\Leftrightarrow \,&b(P_\sigma(F(\sigma)) - F(\sigma),\psi_{1,i};\sigma) = 0\\
\Leftrightarrow \,&b(P_\sigma(F(\sigma)),\psi_{1,i};\sigma) = f(\psi_{1,i})
}
\end{eqnarray*}
such that $P_\sigma\circ F(\sigma)$ is a solution of \req{eq:reduceddiscfp_RBLpaper} and therefore $u_N^\sigma = F_N(\sigma) = P_\sigma\circ F(\sigma)$, since the solution of \req{eq:reduceddiscfp_RBLpaper} is unique. Using this and the fact that $P_\sigma\in L(Y,Y_{N,1})$ is an orthogonal projection, it is easy to see that $F'_N(\sigma) = P_\sigma\circ F'(\sigma)$ and (for $l\in Y_{N,1}$) $F'_N(\sigma)^*l = F'(\sigma)^*(P_\sigma^*l) = F'(\sigma)^* l$. Therefore, the reduced Landweber update $s_{n,i}$ calculated in step $3$ of Procedure \ref{Proc:RBL_update_RBLpaper} does not coincide with the expression $F'_N(\bsigma_i^\delta)^*(u^\delta - F_N(\bsigma_i^\delta))$ (see Procedure \ref{Proc:LW_update_RBLpaper}). The consequence here is, that in general there is no closed expression of the iteration scheme of the RBL method. Instead, the main idea of the RBL method is to determine what kind of PDE solutions are required for the update of the Landweber method and replace them with suitable reduced basis approximations.
\item In \cite{Scherzer_iterativ_multilevel} Scherzer proposes a different methodology where a sequence $\{X_N\}_{N\in\N_0}$ of nested subspaces of $X$ (the infinite dimensional parameter space) with $\bigcup_{N\in\N_0} X_N$ being dense in $X$ is employed. The orthonormal projection $P_N$ on those spaces is then used to develop a multi-level discrete Landweber method. In \cite{Scherzer_et_al_multilvl_Banach} a similar approach is made for a steepest descent method in Banach spaces. As we can see our approach can not be formulated in such a way and the convergence theory developed in the mentioned works can not be adapted to the RBL method.
\item Because of $(i)$ and $(ii)$, we postpone a theoretical investigation of the RBL method to future work.
\end{enumerate}

\end{Remark}


\subsection{Experiments}
\label{subsec:RBL_Exp_RBLpaper} 
 
We want to compare Algorithms \ref{Algo:LW_RBLpaper} \& \ref{Algo:RBL_RBLpaper}. To this end we choose a specific setting to which all experiments refer. We use the parameter space $\mcP_{900}$, $n = 149$ for the finite element space $Y$ and want to reconstruct
\begin{eqnarray*}
\fl
\sigma^+(\bi{x}):= 3 + 2\chi_{\Omega^{(1)}}(\bi{x}) - 2\chi_{\Omega^{(2)}}(\bi{x})\quad \bi{x}\in\Omega\mbox{ with subdomains }\\
\fl
\Omega^{(1)} = [5/30 , 9/30]\times [3/30 , 27/30] \cup \left([9/30 , 27/30] \times \left([3/30 , 7/30] \cup [23/30 , 27/30]\right)\right),\\
\fl
\Omega^{(2)} = \{ \bi{x}\in\Omega\mid \norm{(18/30 , 15/30)\transp - \bi{x}}_2 < 4/30\}.
\end{eqnarray*}
 This is a piecewise constant function with background $3$, contrast $2$ on the C-shaped subdomain $\Omega^{(1)}$ and contrast $-2$ on the disk $\Omega^{(2)}$.
The starting value $\sigma_{start} \equiv 3$ as well as the function $\sigma^+$  are visualized in the top left and bottom left of Figure \ref{fig:compare_reconstructions_RBLpaper}. 
The noisy measurement $u^\delta$ is generated in the following way: we compute the PDE-solution for $\sigma^+$ using Comsol\textsuperscript{\textregistered} and evaluate it at the nodes of the finite element space $Y$. Afterwards we add uniformly distributed random noise with a certain noise level. If not specified differently we add $1\%$ relative noise (corresponding to $1.243\cdot 10^{-4}$ absolute noise) and choose $\tau = 2.5$ in this section. Note that the noise is only added on the inner nodes of the discretization since we assume that the homogeneous Dirichlet data are known and measured correctly. The damping parameter $\omega$ is heuristically chosen as $\omega = \frac{1}{2}(\|F'(\bsigma_{start})\|)^{-1}$ with the $\frac{1}{2}$ resembling local uniform boundedness. Note that $\|F'(\bsigma_{start})\|\ll 1$ such that $\omega$ actually serves as a speed-up of the iteration. The numerical experiments are done using  Matlab\textsuperscript{\textregistered} in conjunction with the libraries RBmatlab and KerMor, which both can be found online\footnote{http://www.ians.uni-stuttgart.de/MoRePaS/software/}.

In the following, three experiments will be carried out in the above \emph{full} setting.  Additionally, the same experiments are done for the (accordingly modified) versions of Algorithms \ref{Algo:LW_RBLpaper} \& \ref{Algo:RBL_RBLpaper} applied to the partial inverse problem \req{eq:partial_inverse problem_RBLpaper} introduced in Remark \ref{remark:PDE_RBLpaper}. In this \emph{partial} setting, we  use the same numerical setting as above but measure the data only on the subdomain $\tilde{\Omega} = [0,1]\times[0,0.5]$, add $1\%$ relative noise (corresponding to $8.786\cdot 10^{-5}$ absolute noise) and use $\tilde{\omega} = \frac{1}{2}(\|\tilde{F}'(\bsigma_{start})\|)^{-1}$ as damping parameter. Figures \ref{fig:compare_reconstructions_RBLpaper} \& \ref{fig:RBL_vs_LW_Updates_RBLpaper} as well as Table \ref{tab:compare_RBL_LW_RBLpaper} contain results for both (partial and full) settings where our discussion will focus on the full setting.

Figure \ref{fig:compare_reconstructions_RBLpaper} shows the reconstructions of $\sigma^+$ in the full setting via Algorithm \ref{Algo:RBL_RBLpaper} in the top middle and via Algorithm \ref{Algo:LW_RBLpaper} in the bottom middle. In addition the reconstructions in the partial setting via Algorithm \ref{Algo:RBL_RBLpaper} in the top right and via Algorithm \ref{Algo:LW_RBLpaper} in the bottom right are shown, where the black box indicates the subdomain $\tilde{\Omega}$.

\begin{figure}[h]
\centering\includegraphics[width=0.9\textwidth]{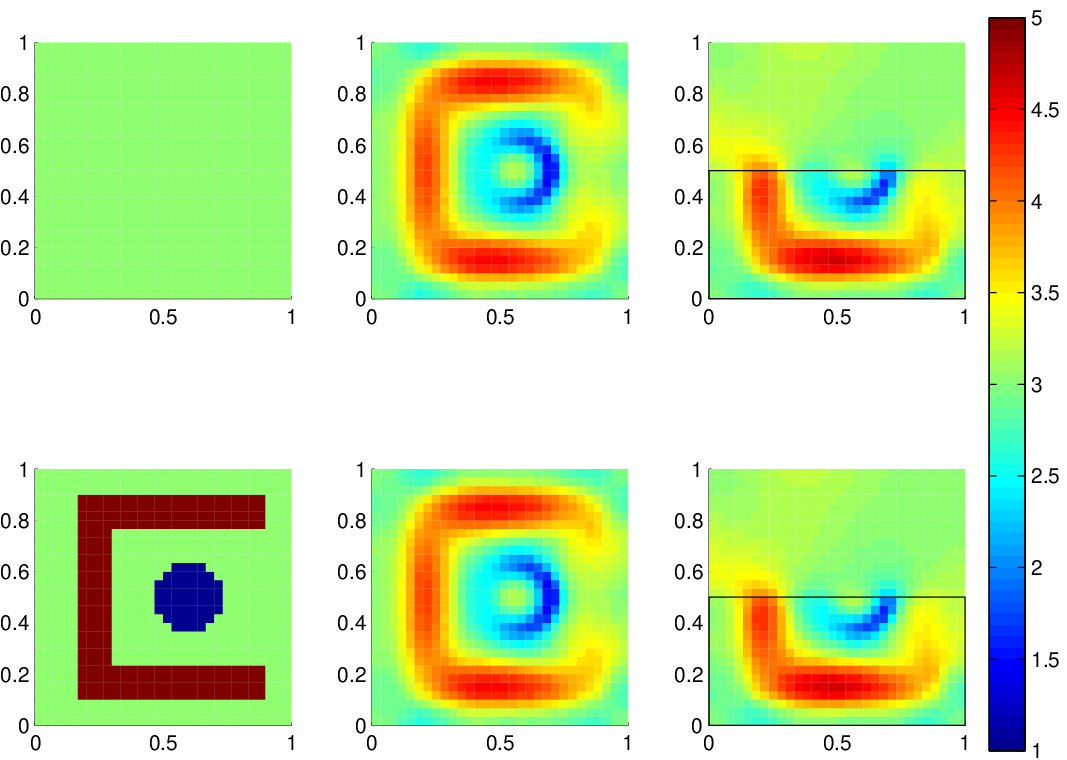}
\caption{The reconstructions of $\sigma^+$ in the full setting via Algorithm \ref{Algo:RBL_RBLpaper} (top middle) and Algorithm \ref{Algo:LW_RBLpaper} (bottom middle), as well as the starting value $\sigma_{start}$ (top left) and the exact value $\sigma^+$ (bottom left). The reconstruction in the partial setting via Algorithm \ref{Algo:RBL_RBLpaper} (top right) and Algorithm \ref{Algo:LW_RBLpaper} (bottom right) including the subdomain $\tilde{\Omega}$ as a black box.}
\label{fig:compare_reconstructions_RBLpaper}
\end{figure}

Concerning the middle column, we cannot distinguish the two reconstructions visually from each other, which is also stated by $\norm{\sigma_{RBL} - \sigma_{LW}}_{L^2(\Omega)} \approx 1.118\cdot 10^{-5}$, such that both algorithms numerically yield the same reconstruction. The shape and location of as well as the contrast on $\Omega^{(1)}$ are well reconstructed. Regarding  $\Omega^{(2)}$, only the location is well reconstructed. The contrast is not fitting everywhere and there is another small circular inclusion with opposite sign inside of $\Omega^{(2)}$. In the partial setting we have a good reconstruction on $\tilde{\Omega}$ and some indications of a reconstruction close to $\tilde{\Omega}$.

Next, we want to compare the Algorithms with respect to the computational time: for Algorithm \ref{Algo:LW_RBLpaper} we measure the total time, the amount of iterations until the discrepancy principle is reached and therefore the time per iteration, as well as the total amount of forward solves. Due to Procedure \ref{Proc:LW_update_RBLpaper}, both discretized problems \req{eq:discfp_RBLpaper} and \req{eq:discadjprob_RBLpaper} are considered here. For Algorithm \ref{Algo:RBL_RBLpaper}, we are interested in the total time and therefore the speed up compared to Algorithm \ref{Algo:LW_RBLpaper}, the amount of and the time per outer iteration (line $2$ - $14$ excluding the repeat-loop from line $7$ - $11$), as well as the amount of and the time per inner iteration (one step of the repeat-loop from line $7$ - $11$) and again the total amount of high-dimensional forward solves. Table \ref{tab:compare_RBL_LW_RBLpaper} contains the respective information.

\begin{table}[h]
\centering
\begin{tabular}{|c|c|c|c|c|c|c|}
\hline
Setting & time (s) & \multicolumn{2}{|c|}{\# Iterations} & \multicolumn{2}{|c|}{time per Iter. (s)} & \# forward solves\\
 \hline
 LW, full  & 187189 & \multicolumn{2}{|c|}{608067} & \multicolumn{2}{|c|}{0.308} & 1216134 \\
 \hline
 & & outer & inner & outer & inner & \\
\hline
 RBL, full & 14661 & 20 & 608083 & 3.705 & 0.024 & 40 \\
\hline
 LW, partial  & 173759 & \multicolumn{2}{|c|}	{580129} & \multicolumn{2}{|c|}{0.299} & 1160258 \\
 \hline
 & & outer & inner & outer & inner & \\
\hline
 RBL, partial & 10638 & 20 & 580133 & 3.670 & 0.018 & 40 \\
\hline
\end{tabular}
\caption{Runtime comparison of Algorithms \ref{Algo:LW_RBLpaper} \& \ref{Algo:RBL_RBLpaper} in the full and partial setting.}
\label{tab:compare_RBL_LW_RBLpaper}
\end{table}

In the full setting the RBL method needs around $4$ hours and the Landweber method needs around $52$ hours of computational time resulting in a speed-up of $13$. 
Due to \req{eq:RBL_alt_termination_RBLpaper}, the reduced spaces are very accurate such that the amount of inner iterations of Algorithm \ref{Algo:RBL_RBLpaper} roughly coincides with the amount of iterations of Algorithm \ref{Algo:LW_RBLpaper}.
We want to highlight that Algorithm \ref{Algo:RBL_RBLpaper} only needed $40$ expensive forward solves compared to the $1216134$ forward solves required for Algorithm \ref{Algo:LW_RBLpaper}.
If we look at the average iteration times of Algorithm \ref{Algo:RBL_RBLpaper}, it becomes clear that sufficient inner iterations have to be made per outer iteration for Algorithm \ref{Algo:RBL_RBLpaper} to pay off in time. We will see in our next experiment that this is the case in the chosen setting. 
Regarding the average time per inner iteration, we have to mention our implementation of the error estimator \req{eq:errest_RBLpaper}: we do not use the offline/online decomposition presented in Procedure \ref{Proc:OffOnDecompErrEst_RBLpaper} since this would result in each online phase to contain a vector-matrix-vector multiplication of dimension $Q_r = N_1\cdot p + 1$, with $p=900$ and $N_1 = \dim Y_{N,1}$. But the matrix in this multiplication is full as we can see in Corollary \ref{Cor:Affdecomp_RBLpaper}, which prohibits this approach. Therefore, we compute the Riesz-representative and its norm in each inner iteration according to Lemma \ref{Lemma:errest_RBLpaper} such that the online segment of Algorithm \ref{Algo:RBL_RBLpaper} is not completely independent of $n^2$ in this example and roughly $50\%$ of the total computational time is spent in computing the error estimator in the inner loop. Hence, it might be interesting to develop termination criteria that still guarantee accurate reduced basis spaces but are less expensive. Similar conclusions can be drawn for the partial setting as it can be seen in Table \ref{tab:compare_RBL_LW_RBLpaper}.

In the third experiment we want to see that both Algorithms \ref{Algo:LW_RBLpaper} \& \ref{Algo:RBL_RBLpaper} behave the same way, with the latter simply being faster. To this end we define the nonlinear Landweber update $\bi{s}_{n,LW} := F'(\bsigma_n^\delta)^*(u^\delta - F(\bsigma_n^\delta))$ in addition to the update of the RBL method (here denoted by $s_{n,RBL}$) described in Procedure \ref{Proc:RBL_update_RBLpaper}. Figure \ref{fig:RBL_vs_LW_Updates_RBLpaper} shows the \emph{update error} $\norm{s_{n,RBL} - s_{n,LW}}_{L^2(\Omega)}$ over the course of the iteration. The plot also includes a vertical dashed line whenever an outer iteration in Algorithm \ref{Algo:RBL_RBLpaper} is performed.

\begin{figure}[h]
\centering\includegraphics[scale = 0.6]{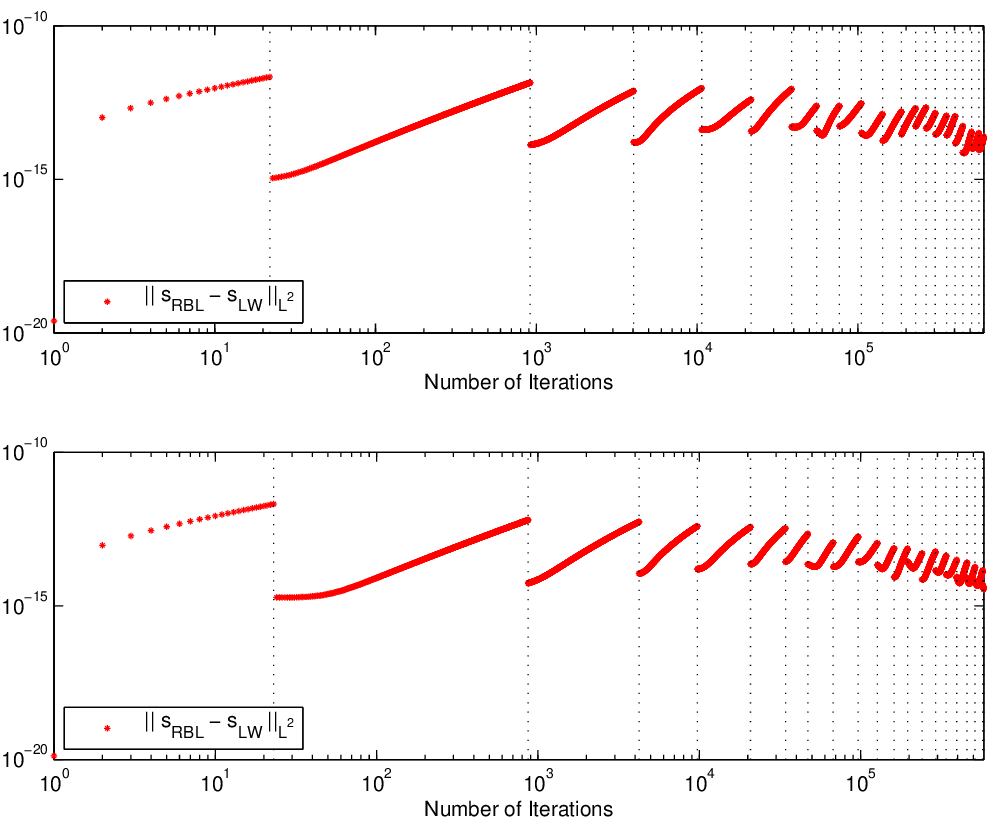}
\caption{Update error $\norm{s_{n,RBL} - s_{n,LW}}_{L^2(\Omega)}$ over the course of the iteration for the full setting (top) and the partial setting (bottom).}
\label{fig:RBL_vs_LW_Updates_RBLpaper}
\end{figure}

We observe the expected behaviour: the more inner iterations of the RBL method are performed for a given set of spaces, the worse the update error gets until one of the termination criteria is met. Note that in this test the alternative termination criterion always triggered except in the very end where the reduced discrepancy principle is reached. With $\norm{F_N(\bsigma_{RBL}) - F(\bsigma_{RBL})}_{L^2(\Omega)}\approx 1.296\cdot 10^{-8}$ and \req{eq:highDP_success_RBLpaper}, the high-dimensional discrepancy principle in line $2$ of Algorithm \ref{Algo:RBL_RBLpaper} is then met as well and the whole algorithm terminates. According to Remark \ref{remark:RBL_RBLpaper} this behaviour was expected.
 If we look at the iteration sequence in Figure \ref{fig:RBL_vs_LW_Updates_RBLpaper} we can observe two further aspects: the more outer iterations are performed, the better the set of reduced spaces fits the region of the parameter space relevant for the solution of the inverse problem, resulting in the update error to decrease as a whole. Finally, the space updates are performed more frequently in the beginning of the iteration sequence (note the logarithmic scale of the iteration axis) to quickly adapt to the region of interest. Once the spaces are well suited, more and more inner iterations per outer iteration can be performed, resulting in Algorithm \ref{Algo:RBL_RBLpaper} to outperform Algorithm \ref{Algo:LW_RBLpaper} by more than an order with respect to the computational time. Again, similar observations can be made in the partial setting as it can be seen on the bottom of Figure \ref{fig:RBL_vs_LW_Updates_RBLpaper}.

In Remark \ref{remark:notheory_for_RBL_RBLpaper} we justified the current lack of a theoretical investigation of the RBL method. Still, we can provide an experiment regarding its numerical regularization property. In the usual full setting the error $\norm{\sigma_{RBL} - \sigma^+}_{L^2(\Omega)}$ is shown over the decreasing relative noise level $\delta$ in Figure \ref{fig:RBL_relnoise_errorconv_RBLpaper}.

\begin{figure}[h]
\centering\includegraphics[scale = 0.5]{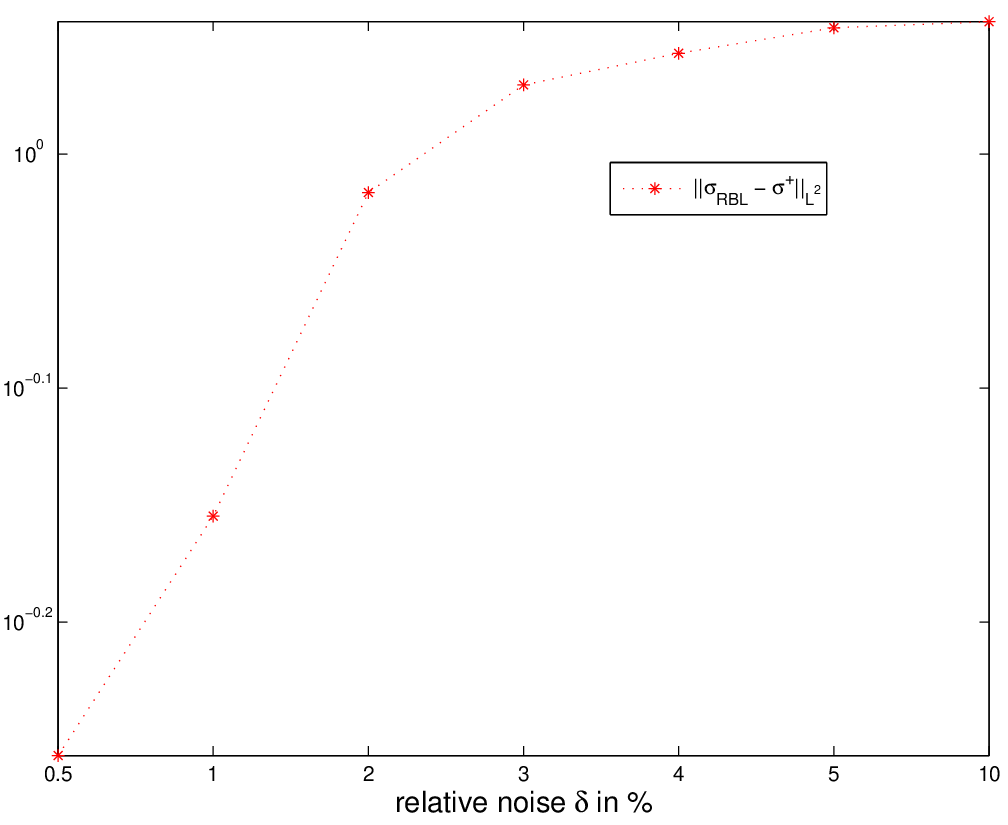}
\caption{Error $\norm{\sigma_{RBL} - \sigma^+}_{L^2(\Omega)}$ over the decreasing relative noise level $\delta$.}
\label{fig:RBL_relnoise_errorconv_RBLpaper}
\end{figure}

 By this we can conclude that a numerical regularization property is present for the RBL method.


\section{Conclusion}
\label{sec:Conclusion_RBLpaper}
For the problem of reconstructing the conductivity in the stationary heat equation it was investigated how reduced basis methods and the nonlinear Landweber method can be combined to reduce the overall computational time. A direct approach was shortly discussed to be inapplicable in the presence of high-dimensional parameter spaces. A new approach, the RBL method, was presented. It combined adaptive space enrichment of the reduced basis spaces with the nonlinear Landweber method. Using the RBL method, high-resolution images of the conductivity could be reconstructed, with the method being as accurate as the nonlinear Landweber method but roughly $13$ times faster.
Future work should contain a theoretical investigation of the RBL method including convergence theory as well as the application of its methodology to other inverse problems and more sophisticated regularization algorithms.

\ackn{We thank Vladimir Druskin for pointing out \cite{Druskin_Zaslavski} and introducing us to his approach.}

\section*{Appendix}
\appendix
\setcounter{section}{1}

We prove the statement made in \req{eq:fd_property_Linfty_RBLpaper} of Section \ref{sec:Problem formulation_RBLpaper}.

For $\sigma\in L_+^\infty(\Omega)$ and $\kappa\in L^2(\Omega)$ with $\sigma + \kappa \in L_+^\infty(\Omega)$ let $u^\sigma$, $u^{\sigma + \kappa}$ denote the corresponding solutions of \req{eq:forward_operator_RBLpaper}, $v_\kappa^\sigma$ the solution of \req{eq_frechetderivative_RBLpaper} and $\uc := \ei(\sigma)$. 
Note that $u^\sigma$ solving \req{eq:forward_operator_RBLpaper} is equivalent to $u^\sigma$ solving $b(u^\sigma ,v;\sigma) = f(v)$ for all $v\in H_0^1(\Omega)$. It holds for all $w\in H_0^1(\Omega)$
\begin{eqnarray}
&\int_\Omega (\sigma + \kappa)(\nabla u^{\sigma + \kappa} \cdot \nabla w) - (\sigma\nabla u^\sigma \cdot \nabla w)\,dx = 0 \nonumber\\
\label{App_proof_FD:eq1_RBLpaper}
\Leftrightarrow &\int_\Omega \sigma (\nabla (u^{\sigma + \kappa} - u^\sigma)\cdot\nabla w)\,dx = -\int_\Omega \kappa \nabla u^{\sigma + \kappa}\cdot\nabla w\,dx .
\end{eqnarray}
The test function $w = u^{\sigma + \kappa} - u^\sigma$ in \req{App_proof_FD:eq1_RBLpaper} and the H\"older inequality yield
\begin{eqnarray*}
\fl
&\uc\norm{\nabla(u^{\sigma + \kappa} - u^\sigma)}_{L^2(\Omega)}^2 \leq \int_{\Omega}\sigma (\nabla (u^{\sigma + \kappa} - u^\sigma)\cdot\nabla (u^{\sigma + \kappa} - u^\sigma)\,dx\\
\fl
& =  -\int_\Omega \kappa \nabla u^{\sigma + \kappa}\cdot\nabla (u^{\sigma + \kappa} - u^\sigma)\,dx
\leq \norm{\kappa}_\infty \norm{\nabla u^{\sigma + \kappa}}_{L^2(\Omega)}  \norm{\nabla (u^{\sigma + \kappa} -u^\sigma)}_{L^2(\Omega)}\\
\fl
&\Leftrightarrow \norm{\nabla(u^{\sigma + \kappa} - u^\sigma)}_{L^2(\Omega)} \leq \frac{\norm{\kappa}_\infty}{\uc}\norm{\nabla u^{\sigma + \kappa}}_{L^2(\Omega)}.
\end{eqnarray*}
Similar arguments for $\sigma + \kappa$ and test function $w = u^{\sigma + \kappa}$ in \req{eq:forward_operator_RBLpaper} together with the inequality of Poincar\'{e}-Friedrich and the notation $c :=\norm{1}_{L^2(\Omega)}$ yield
\begin{eqnarray*}
\fl
\uc\norm{\nabla u^{\sigma + \kappa}}_{L^2(\Omega)}^2 &\leq \int_{\Omega}(\sigma + \kappa ) (\nabla u^{\sigma + \kappa}\cdot\nabla u^{\sigma + \kappa} )\,dx = -\int_\Omega 1 \cdot u^{\sigma + \kappa}\,dx\leq  \norm{1}_{L^2(\Omega)}  \norm{u^{\sigma + \kappa}}_{L^2(\Omega)}\\
\fl
 &\leq c\cdot C_{PF} \norm{\nabla u^{\sigma + \kappa}}_{L^2(\Omega)}.
\end{eqnarray*}
Introducing the constant $C' := \frac{c\cdot C_{PF}}{\uc^2}$ we get
\begin{eqnarray}
\label{App_proof_FD:eq2_RBLpaper}
\norm{\nabla(u^{\sigma + \kappa} - u^\sigma)}_{L^2(\Omega)} \leq C' \norm{\kappa}_\infty .
\end{eqnarray}
With \req{App_proof_FD:eq2_RBLpaper} and the definition of $v_\kappa^\sigma$ in \req{eq_frechetderivative_RBLpaper} it follows
\begin{eqnarray}
\fl
 &\int_\Omega \sigma \nabla (u^{\sigma + \kappa} - u^\sigma)\cdot\nabla w - \sigma \nabla v_\kappa^\sigma \cdot \nabla w\,dx = -\int_\Omega \kappa \nabla u^{\sigma + \kappa}\cdot\nabla w\,dx + \int_\Omega \kappa \nabla u^\sigma\cdot\nabla w\,dx\nonumber \\
\fl
\label{App_proof_FD:eq3_RBLpaper}
\Leftrightarrow &\int_\Omega \sigma \nabla (u^{\sigma + \kappa} - u^\sigma - v_\kappa^\sigma)\cdot\nabla w\,dx = \int_\Omega \kappa \nabla (u^\sigma - u^{\sigma + \kappa})\cdot\nabla w\,dx .
\end{eqnarray}
The test function $w = u^{\sigma + \kappa} - u^\sigma - v_\kappa^\sigma$ in \req{App_proof_FD:eq3_RBLpaper}, the H\"older inequality and \req{App_proof_FD:eq2_RBLpaper} yield
\begin{eqnarray*}
\fl
\uc\norm{\nabla (u^{\sigma + \kappa} - u^\sigma - v_\kappa^\sigma)}_{L^2(\Omega)}^2 
&\leq \int_\Omega \sigma \nabla (u^{\sigma + \kappa} - u^\sigma - v_\kappa^\sigma)\cdot\nabla (u^{\sigma + \kappa} - u^\sigma - v_\kappa^\sigma)\,dx\\ 
\fl
&= \int_\Omega \kappa \nabla (u^\sigma - u^{\sigma + \kappa})\cdot\nabla (u^{\sigma + \kappa} - u^\sigma - v_\kappa^\sigma)\,dx\\
\fl
&\leq \norm{\kappa}_\infty \norm{\nabla (u^\sigma - u^{\sigma + \kappa})}_{L^2(\Omega)} \norm{\nabla (u^{\sigma + \kappa} - u^\sigma - v_\kappa^\sigma)}_{L^2(\Omega)}\\
\fl
\Leftrightarrow \norm{\nabla (u^{\sigma + \kappa} - u^\sigma - v_\kappa^\sigma)}_{L^2(\Omega)} &\leq \frac{1}{\uc}\norm{\kappa}_\infty \norm{\nabla (u^\sigma - u^{\sigma + \kappa})}_{L^2(\Omega)} \leq \frac{C'}{\uc} \norm{\kappa}_\infty^2 .
\end{eqnarray*}
Using the inequality of Poincar\'{e}-Friedrich again and introducing $C'' := \frac{C_{PF}\cdot C'}{\uc}$ we get
\begin{eqnarray*}
\fl
\norm{u^{\sigma + \kappa} - u^\sigma - v_\kappa^\sigma}_{L^2(\Omega)} \leq C_{PF} \norm{\nabla (u^{\sigma + \kappa} - u^\sigma - v_\kappa^\sigma)}_{L^2(\Omega)} \leq C''\norm{\kappa}_\infty^2 .
\end{eqnarray*}
Since $C''$ is independant of $\kappa$ the statement follows
\begin{eqnarray*}
\fl
\lim_{\norm{\kappa}_\infty \rightarrow 0}\frac{\norm{\mcF(\sigma + \kappa) - \mcF(\sigma) - \mcF'(\sigma)\kappa}_{L^2(\Omega)}}{\norm{\kappa}_\infty} & =  
\lim_{\norm{\kappa}_\infty \rightarrow 0} \frac{\norm{u^{\sigma + \kappa} - u^\sigma - v_\kappa^\sigma}_{L^2(\Omega)}}{\norm{\kappa}_\infty} \\
\fl
&\leq \lim_{\norm{\kappa}_\infty \rightarrow 0} \frac{C''\norm{\kappa}_\infty^2}{\norm{\kappa}_\infty} = 0 .
\end{eqnarray*}

\section*{References}
\bibliography{Generalbib}

\begin{thebibliography}{10}

\bibitem{adams2003sobolev}
R.~Adams and J.~Fournier.
\newblock {\em Sobolev Spaces}.
\newblock Pure and Applied Mathematics. Elsevier Science, 2003.

\bibitem{EIM}
M.~Barrault, Y.~Maday, N.~C. Nguyen, and A.~T. Patera.
\newblock An {'}empirical interpolation{'} method: application to efficient
  reduced-basis discretization of partial differential equations.
\newblock {\em Comptes Rendus Mathematique}, 339(9):667 -- 672, 2004.

\bibitem{cuiWilcox2014datadriveninversion}
T.~Cui, Y.~M. Marzouk, and K.~E. Willcox.
\newblock Data-driven model reduction for the bayesian solution of inverse
  problems.
\newblock {\em International Journal for Numerical Methods in Engineering},
  102(5):966--990, 2015.

\bibitem{Scherzer_et_al_multilvl_Banach}
M.~de~Hoop, L.~Qiu, and O.~Scherzer.
\newblock An analysis of a multi-level projected steepest descent iteration for
  nonlinear inverse problems in {B}anach spaces subject to stability
  constraints.
\newblock {\em Numerische Mathematik}, 129(1):127--148, 2015.

\bibitem{DH14b}
M.~A. Dihlmann and B.~Haasdonk.
\newblock Certified {PDE}-constrained parameter optimization using reduced
  basis surrogate models for evolution problems.
\newblock {\em COAP, Computational Optimization and Applications},
  60(3):753--787, 2015.

\bibitem{Druskin_Zaslavski}
V.~Druskin and M.~Zaslavsky.
\newblock On combining model reduction and {G}auss-{N}ewton algorithms for
  inverse partial differential equation problems.
\newblock {\em Inverse Problems}, 23(4):1599, 2007.

\bibitem{eftang2010hp}
J.~L. Eftang, A.~T. Patera, and E.~M. R{\o}nquist.
\newblock An "hp" certified reduced basis method for parametrized elliptic
  partial differential equations.
\newblock {\em SIAM Journal on Scientific Computing}, 32(6):3170--3200, 2010.

\bibitem{engl1996regularization}
H.~W. Engl, M.~Hanke, and A.~Neubauer.
\newblock {\em Regularization of inverse problems}, volume 375.
\newblock Springer Science \& Business Media, 1996.

\bibitem{Ha14RBTut}
B.~Haasdonk.
\newblock Reduced basis methods for parametrized {PDE}s -- a tutorial
  introduction for stationary and instationary problems.
\newblock Technical report, IANS, University of Stuttgart, 2014.
\newblock Chapter to appear in P. Benner, A. Cohen, M. Ohlberger and K.
  Willcox: "Model Reduction and Approximation for Complex Systems", Springer.

\bibitem{haasdonk2011training}
B.~Haasdonk, M.~Dihlmann, and M.~Ohlberger.
\newblock A training set and multiple bases generation approach for
  parameterized model reduction based on adaptive grids in parameter space.
\newblock {\em Mathematical and Computer Modelling of Dynamical Systems},
  17(4):423--442, 2011.

\bibitem{Hanke_97_a_example_2d}
M.~Hanke.
\newblock A regularizing {L}evenberg - {M}arquardt scheme, with applications to
  inverse groundwater filtration problems.
\newblock {\em Inverse Problems}, 13(1):79, 1997.

\bibitem{Hanke_LW_13_tau1}
M.~Hanke.
\newblock A note on the nonlinear {L}andweber iteration.
\newblock {\em Numerical Functional Analysis and Optimization},
  35(11):1500--1510, 2014.

\bibitem{Hanke_Neubauer_Scherzer_Land_conv}
M.~Hanke, A.~Neubauer, and O.~Scherzer.
\newblock A convergence analysis of the {L}andweber iteration for nonlinear
  ill-posed problems.
\newblock {\em Numerische Mathematik}, 72(1):21--37, 1995.

\bibitem{Stamm_Efficient_greedy}
J.~S. Hesthaven, B.~Stamm, and S.~Zhang.
\newblock Efficient greedy algorithms for high-dimensional parameter spaces
  with applications to empirical interpolation and reduced basis methods.
\newblock {\em ESAIM: Mathematical Modelling and Numerical Analysis},
  48(1):259--283, 2014.

\bibitem{HoangDentalTissuereduced}
K.~C. Hoang, B.~C. Khoo, G.~R. Liu, N.~C. Nguyen, and A.~T. Patera.
\newblock Rapid identification of material properties of the interface tissue
  in dental implant systems using reduced basis method.
\newblock {\em Inverse Problems in Science and Engineering}, 21(8):1310--1334,
  2013.

\bibitem{Ito_Kunisch_aex}
K.~Ito and K.~Kunisch.
\newblock On the injectivity and linearization of the coefficient-to-solution
  mapping for elliptic boundary value problems.
\newblock {\em Journal of Mathematical Analysis and Applications}, 188(3):1040
  -- 1066, 1994.

\bibitem{kaltenbacher2008iterative}
B.~Kaltenbacher, A.~Neubauer, and O.~Scherzer.
\newblock {\em Iterative Regularization Methods for Nonlinear Ill-Posed
  Problems}.
\newblock Radon Series on Computational and Applied Mathematics. De Gruyter,
  2008.

\bibitem{Kaltenbacher_Banachspaces_a_example_2d}
B.~Kaltenbacher, F.~Sch{\"o}pfer, and T.~Schuster.
\newblock Iterative methods for nonlinear ill-posed problems in {B}anach
  spaces: convergence and applications to parameter identification problems.
\newblock {\em Inverse Problems}, 25(6):065003, 2009.

\bibitem{knowles1999uniqueness}
I.~Knowles.
\newblock Uniqueness for an elliptic inverse problem.
\newblock {\em SIAM Journal on Applied Mathematics}, 59(4):1356--1370, 1999.

\bibitem{lass2014PHDThesis}
O.~Lass.
\newblock {\em Reduced order modeling and parameter identification for coupled
  nonlinear PDE systems}.
\newblock PhD thesis, Universit{\"a}t Konstanz, 2014.

\bibitem{nguyenPHDThesis}
N.~C. Nguyen.
\newblock {\em Reduced-basis approximation and a posteriori error bounds for
  nonaffine and nonlinear partial differential equations: Application to
  inverse analysis}.
\newblock PhD thesis, Singapore-MIT Alliance, National University of Singapore,
  2005.

\bibitem{nguyenrozza2009reduced}
N.~C. Nguyen, G.~Rozza, D.~Huynh, and A.~T. Patera.
\newblock Reduced basis approximation and a posteriori error estimation for
  parametrized parabolic {PDE}s; application to real-time {B}ayesian parameter
  estimation.
\newblock {\em Biegler, Biros, Ghattas, Heinkenschloss, Keyes, Mallick,
  Tenorio, van Bloemen Waanders, and Willcox, editors, Computational Methods
  for Large Scale Inverse Problems and Uncertainty Quantification, John Wiley
  \& Sons, UK}, 2009.

\bibitem{richter1981uniqueness}
G.~R. Richter.
\newblock An inverse problem for the steady state diffusion equation.
\newblock {\em SIAM Journal on Applied Mathematics}, 41(2):210--221, 1981.

\bibitem{rieder2003keine}
A.~Rieder.
\newblock {\em Keine Probleme mit Inversen Problemen: Eine Einf{\"u}hrung in
  ihre stabile L{\"o}sung}.
\newblock Vieweg+Teubner Verlag, 2003.

\bibitem{RB_master_paper}
G.~Rozza, D.~Huynh, and A.~T. Patera.
\newblock Reduced basis approximation and a posteriori error estimation for
  affinely parametrized elliptic coercive partial differential equations.
\newblock {\em Archives of Computational Methods in Engineering},
  15(3):229--275, 2008.

\bibitem{Scherzer_iterativ_multilevel}
O.~Scherzer.
\newblock An iterative multi level algorithm for solving nonlinear ill-posed
  problems.
\newblock {\em Numerische Mathematik}, 80(4):579--600, 1998.

\bibitem{urban2014greedy}
K.~Urban, S.~Volkwein, and O.~Zeeb.
\newblock Greedy sampling using nonlinear optimization.
\newblock In A.~Quarteroni and G.~Rozza, editors, {\em {R}educed {O}rder
  {M}ethods for {M}odeling and {C}omputational {R}eduction}, volume~9 of {\em
  MS\&A - Modeling, Simulation and Applications}, pages 137--157. Springer
  International Publishing, 2014.

\bibitem{veroy2003posteriori}
K.~Veroy, C.~Prud'homme, D.~V. Rovas, and A.~T. Patera.
\newblock A posteriori error bounds for reduced-basis approximation of
  parametrized noncoercive and nonlinear elliptic partial differential
  equations.
\newblock In {\em Proceedings of the 16th AIAA computational fluid dynamics
  conference}, volume 3847, pages 23--26, 2003.

\bibitem{zahr2014progressive}
M.~J. Zahr and C.~Farhat.
\newblock Progressive construction of a parametric reduced-order model for
  {PDE}-constrained optimization.
\newblock {\em International Journal for Numerical Methods in Engineering},
  102(5):1111--1135, 2015.

\end{thebibliography}
\bibliographystyle{abbrv}

\end{document}